\documentclass[12pt, a4paper, reqno, twoside, makeidx]{amsart}
\usepackage[margin=3cm, marginpar=3cm]{geometry}
\usepackage{master}
\usepackage{verbatim}
\usepackage{overlay}

\usepackage{tikz}
\usetikzlibrary{arrows}
\tikzset{aar/.style={->, thick}}
\tikzset{taar/.style={double, double equal sign distance, -implies}}
\tikzset{amar/.style={->, dashed}}
\tikzset{amarp/.style={->, dashed,color=red}}
\tikzset{dmar/.style={->, dashed}}

\usepackage{amssymb}
\newcommand*{\QEDB}{\hfill\ensuremath{\square}}%

\newcommand{\Z}{\mathbb{Z}}

\newcommand{\R}{\mathbb{R}}
\newcommand{\C}{\mathbb{C}}

\newcommand{\Tt}{\mathbb{T}}
\newcommand{\F}{\mathbb{Z}_2}
\newcommand{\x}{\mathbf{x}}
\newcommand{\y}{\mathbf{y}}
\newcommand{\boa}{\boldsymbol{\alpha}}
\newcommand{\bob}{\boldsymbol{\beta}}

\newcommand{\T}{\mathbb{T}}

\begin{document}
\title[Knot Floer homology and curved bordered algebras]
{An introduction to knot Floer homology and curved bordered algebras}
\author{Antonio Alfieri}
\address{Central European University}
\email{alfieri\_antonio@phd.ceu.edu}
\address{University of California, Berkeley}
\email{jacksontvandyke@berkeley.edu}
\author{Jackson Van Dyke}
\maketitle

\vspace{-0.3cm}
\begin{abstract} We survey  Ozsv\' ath-Szab\' o's bordered approach to knot Floer homology. After a quick introduction to knot Floer homology, we introduce the relevant algebraic concepts ($\mathcal{A}_\infty$-modules, type  $D$-structures, box tensor product, etc.), we discuss partial Kauffman states, the construction of the boundary algebra, and sketch Ozsv\' ath and Szab\' o's analytic construction of the type $D$-structure associated to an upper diagram. Finally we give an explicit description of the structure maps of the $DA$-bimodules of some elementary partial diagrams. These can be used to perform explicit computations of the knot Floer differential of any knot in $S^3$. The boundary DGAs $\mathcal{B}(n,k)$ and $\mathcal{A}(n,k)$ of \cite{bordered1} are replaced here by an associative algebra $\mathcal{C}(n)$. These are the notes of two lecture series delivered by Peter Ozsv\' ath and Zolt\' an Szab\' o at Princeton University during the summer of 2018. 
\end{abstract}

\tableofcontents
\thispagestyle{empty}

\section{Introduction}
A powerful paradigm in computer science is the so called \textit{Divide et Impera}. It consists of dividing a complicated problem into smaller subproblems in order to solve them individually. In topology this translates to the \textit{cut and paste} philosophy. More specifically: suppose that one wants to compute an invariant $D$ of some space $X$, then one can divide $X$ into  pieces $P_1, \dots , P_N$ and can compute some sort of related invariant $I$ for the pieces, and develop some gluing formula expressing $D(X)$ as a function of $I(P_1), \dots , I(P_N)$. A typical example is when $D(X)$ is the fundamental group: in this case the invariants $I(P_i)$ are some inclusion maps, and the gluing principle is the Seifert-van Kampen Theorem. 

In four-manifold topology, cut and paste techniques to compute Seiberg-Witten invariants were pioneered in \cite{Froyshov} and eventually led to the definition of the Monopole Floer homology groups by Kronheimer and Mrowka \cite{kronheimer_mrowka_2007}. 
In the case of the Heegaard Floer three-manifold groups an implementation of the cut and paste approach first appeared in \cite{LOT}. In this work Lipshitz, Ozsv\' ath and Thurston associate to three manifolds with boundary $Y_1$ and $Y_2$ a type $D$-structure $CFD(Y_1)$  and a $\cA_\infty$-module $CFA(Y_2)$ respectively, so that $CF(Y_1 \cup_\partial -Y_2)=CFD(Y_1)\boxtimes CFA(Y_2)$ where, whatever these algebraic structures are, $\boxtimes$ is some sort of tensor product operation that takes as input a type $D$-structure and an $\cA_\infty$-module, and gives back a chain complex. 

Ozsv\' ath and Szab\' o developed a similar construction in the setting of knot Floer homology \cite{bordered1}. Given a knot $K \subset \R^3$ one can split it into two parts by means of a two-plane $z=t$. Denote by $K_{[t, +\infty)}=K \cap \{z\geq t\}$  the portion of $K$ lying above the plane $z=t$. Similarly denote  by $K_{(-\infty,t]}=K \cap \{z\leq t\}$ the part lying below it. In \cite{bordered1} Ozsv\' ath and Szab\' o associate to $K_{[ t, + \infty)}$ and $K_{( -\infty, t]}$ a type $D$-structure $DFK(K_{[t,+\infty)})$ and an $\cA_\infty$-module $AFK(K_{( -\infty, t])})$ so that 
\[CFK(K)= AFK(K_{( -\infty, t])}) \boxtimes DFK(K_{[t,+\infty)}) \ . \] 
More generally, they develop some sort of Morse theoretic approach one can apply to compute knot Floer homology: given $t_1<t_2 $ they associate to $K_{[t_1, t_2]}=K \cap \{t_1\leq z\leq t_2\}$ a type $DA$-structure\footnote{A type $DA$-structure is an algebraic structure that can be thought simultaneously as a type $D$-structure and an $\cA_\infty$-module. Type $D$- and $DA$-structures can be paired in order to produce a new type $D$-structure.} $DAFK(K_{[t_1, t_2]})$ such that 
\[DFK(K_{[t_1 , +\infty )})=DAFK(K_{[t_1, t_2]}) \boxtimes DFK(K_{[ t_2, +\infty)})  \ .\]
The scope of these lecture notes is to survey the algebraic language of $\cA_\infty$-modules, type $D$- and $DA$-structures, and sketch Ozsv\' ath and Szab\' o's recent construction. We will only assume some familiarity with Floer homology and the Fukaya category as in \cite{auroux_fukaya} (for motivational reasons), and with Heegaard Floer homology as in \cite{OS_intro}. 

\vspace{0.3cm}
\begin{small}
\textbf{Acknowledgements.}  The authors would like to thank Peter Ozsv\' ath, Zolt\' an Szab\' o and  Andr\' as Stipsicz, for many useful conversations. The first author is partially supported by the NKFIH Grant \' Elvonal (Frontier) KKP 126683 and from K112735.
\end{small}

\section{Knot Floer homology}
Knot Floer homology is a knot invariant introduced by Ozsv\' ath and Szab\' o 
\cite{OS_knots}, and independently by Rasmussen \cite{rasmussen_phd}.  
We presently review their construction and some related variations.

\subsection{Knot projections and Heegaard diagrams}\label{heegaarddiagram}
A Heegaard diagram $\left( \Sigma , \boa , \bob \right)$ of $S^3$
together with two base points $w$ and $z \in\Sigma$ lying in the complement of the $\alpha$- and the $\beta$-curves specifies an oriented knot $K\subset S^3$ as follows. 
Connect $w$ and $z$ in $\Sigma$ by an arc disjoint from the $\alpha$-curves, 
and push the resulting arc $a$ in the handlebody of the $\beta$-curves. 
Similarly, connect $w$ and $z$ by an arc disjoint from the $\beta$-curves, 
and push the resulting arc $b$ in the handlebody of the $\alpha$-curves. 
This produces a closed loop $K=a \cup b$ that can be oriented by taking $\p a = z - w = - \p b$.

\begin{figure}[t]
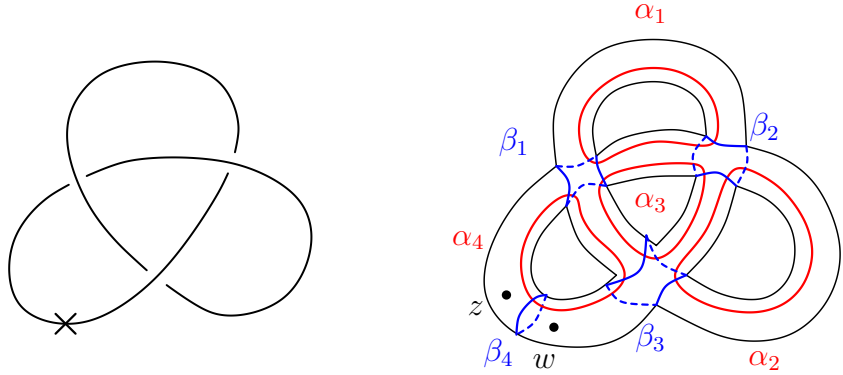

\begin{overlay}
\pict{doubly_pointed.pdf}{0.8}
\toptext{\textcolor{red}{$\al_1$}}{3,2.5}
\toptext{\textcolor{red}{$\al_2$}}{4.5,-2.1}
\toptext{\textcolor{red}{$\al_3$}}{3,0}
\toptext{\textcolor{red}{$\al_4$}}{0.6,-0.5}
\toptext{\textcolor{blue}{$\b_1$}}{1.2,0.8}
\toptext{\textcolor{blue}{$\b_2$}}{4.5,1}
\toptext{\textcolor{blue}{$\b_3$}}{3,-1.8}
\toptext{\textcolor{blue}{$\b_4$}}{1,-2}
\toptext{$z$}{0.7,-1.4}
\toptext{$w$}{1.6,-2.1}
\end{overlay}
\caption{Doubly-pointed Heegaard diagram of the trefoil.}
\label{fig:doubly_pointed}
\end{figure}

One can also go in the opposite direction. Consider a knot $K\subset S^3$. Let $V$  be a planar projection of $K$. This is a connected, $4$-valent planar graph. Thickening $V \subset \R^2\times 0 \subset \R^3 \cup \{ \infty \}$ we get a genus $g$ solid handlebody $U \subset S^3$ providing, together with its complement $U^c=\overline{S^3-V}$, a Heegaard splitting of $S^3$. As $\al$ curves of the corresponding Heegaard diagram (supported on $\Sigma = \p U$) we choose the boundary of the compact complementary regions of $V$. (This provides a set of compressing circles for $U_\alpha=U^c$). As compressing circles of $U_\beta=U$ ($\beta$-curves) we take one curve in correspondence with each crossing as suggested by Figure \ref{fig:doubly_pointed}, and one further $\beta$-curve representing a meridian of $K$. By taking base points on the two sides of the $\beta$-curve corresponding to the meridian we get a doubly pointed Heegaard diagram $\left( \Sigma , \boa , \bob , z , w \right)$ representing $K \subset S^3$. 

\begin{exr} Prove that in the diagram  associated to a knot projection there are as many $\alpha$-curves as $\beta$-curves, and that this number equals the genus.  
\end{exr}

\subsection{The Floer setup of a Heegaard diagram}
In \cite{OS_manifolds} Ozsv\' ath and Szab\' o associate to a Heegaard diagram $(\Sigma, \mb{\alpha}, \mb{\beta})$ a symplectic manifold $(M, \omega)$ together with two Lagrangians $L_{\mb{\alpha}}$ and $L_{\mb{\beta}} \subset M$. Their construction goes as follows. 

Endow $\Sigma$ with the structure of a Riemann surface and consider the space of degree $g$ divisors over $\Sigma$. This is the $g$-fold symmetric product $\Sym^g(\Sigma)$ of $\Sigma$
\begin{equation*}
\Sym^g( \Sigma ) = \Sigma^{\times g}/\mathfrak{S}_g\ .
\end{equation*}
Here $\mathfrak{S}_g$ denotes the symmetric group on $g$ letters acting on the $g$-fold cartesian product $\Sigma^{\times g}$  by permutation of the coordinates. Although $\mathfrak{S}_g$ does not act freely, $\Sym^g(\Sigma)$ is a smooth complex manifold. Notice that the projection $\pi: \Sigma^{\times g} \to \Sym^g(\Sigma)$ is an analytic covering with branching locus the \textit{fat diagonal} 
\[\Delta=\{  x_1+ \dots + x_g \in  \Sym^g(\Sigma) \ | \ x_i=x_j \text{ for some } i \not=j \} \ .\]

\begin{rmk} At first glance it could be surprising that $\Sym^g(\Sigma)$ is a smooth manifold. This is clear if we look at the local picture. Consider the $g$-fold symmetric product of the complex plane $\Sym^g(\C)$. This is the space of unordered $g$-tuples  of complex numbers $\z=\{z_1, \dots, z_g\}$ where each number can appear more than once. Sending an unordered $g$-tuple $\z=\{z_1, \dots, z_g\}$ to the coefficients $(a_0, \dots , a_{g-1})$ of the (monic) polynomial $\prod_i (x-z_i)=x^g+a_{g-1} x^{g-1}+ \dots +a_0 $ we get a map $\Sym^g(\C)\to \C^g$. It is easy to verify that, as a consequence of the Fundamental Theorem of Algebra, the map $\z \mapsto (a_0, \dots , a_{g-1})$ gives a homeomorphism $\Sym^g(\C)\simeq \C^g$. 
\end{rmk}

One can turn $\Sym^g(\Sigma)$ into a symplectic manifold as follows. Pick an area form $\nu \in \Omega^2(T\Sigma)$ taming the complex structure of $\Sigma$. This induces a symplectic form $\nu^{\times g}= \nu \times \dots \times \nu$ over $\Sigma^{\times g}$ taming the product complex structure of $\Sigma^{\times g}$. Now one would like to take the push forward $\omega= \pi_*(\nu^{\times g})$ of $\nu^{\times g}$ through the covering projection $\pi: \Sigma^{\times g} \to \Sym^g(\Sigma)$. Unfortunately, this can't be done since the form $\omega$ would be somehow singular near to the branching locus of $\pi$ (the fat diagonal). This problem can be overcame using the work of Perutz \cite{Perutz}. 

\begin{thm}\label{perutzform}
Suppose that $U$ is an open subset containing the fat diagonal $\Delta \subset \Sym^g(\Sigma)$. Then there exists a K\" ahler form $\omega$ on $\Sym^g(\Sigma)$ that agrees on $\Sym^g(\Sigma)\setminus \overline{U}$ with the push-forward $\pi_*(\nu^{\times g})$ of the product form. Furthermore, one can choose $\omega$ so that $[\omega]=\pi_*[\nu^{\times g}] \in H^2(\Sym^g(\Sigma); \R)$.
\end{thm}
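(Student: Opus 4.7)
The key reduction is that on the complement $\Sym^g(\Sigma)\setminus\Delta$ the projection $\pi$ is an unramified $g!$-fold covering, and because $\nu^{\times g}$ is $\mathfrak{S}_g$-invariant, its push-forward $\pi_*(\nu^{\times g})$ is already a smooth K\"ahler form there. The entire difficulty is therefore concentrated in a neighborhood of $\Delta$ inside $U$: one must replace the \emph{a priori} singular push-forward on that neighborhood by a smooth strictly positive $(1,1)$-form which glues correctly across the transition collar and represents the correct cohomology class.

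My plan is to interpolate at the level of K\"ahler potentials. First choose a global K\"ahler form $\omega_0$ on $\Sym^g(\Sigma)$; such a form exists because $\Sym^g(\Sigma)$ is a smooth projective variety (local smoothness is visible from the identification $\Sym^g(\C)\cong\C^g$ via elementary symmetric polynomials, and global projectivity from Abel--Jacobi). Pick a smaller open $U'\Subset U$ still containing $\Delta$ and a cutoff function $\chi$ supported in $U$ with $\chi\equiv 1$ on $U'$. On $\Sym^g(\Sigma)\setminus\Delta$ the push-forward admits smooth local K\"ahler potentials $\tilde F := f\circ\pi_1+\dots+f\circ\pi_g$ descending from a local potential $f$ of $\nu$ on $\Sigma$ (the sum is symmetric, hence descends), and $\omega_0$ has local K\"ahler potentials $\psi$. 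I would then define $\omega$ to equal $\pi_*(\nu^{\times g})$ outside $\overline U$, to equal $N\omega_0$ for some $N\gg 0$ on $U'$, and on the collar $U\setminus\overline{U'}$ to be given locally by $i\partial\bar\partial[(1-\chi)\tilde F+\chi N\psi]$, glued via a partition of unity.

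The main obstacle is maintaining positivity across the collar: derivatives of $\chi$ produce error terms of indefinite sign. Choosing $N\gg 0$ ensures that the positive contribution of $N\omega_0$ on the collar dominates these errors and yields a strictly positive $(1,1)$-form there; on $U'$ and off $\overline U$ positivity is automatic since one of the summands vanishes. The cohomology class is then fixed using the $\partial\bar\partial$-lemma on the K\"ahler manifold $\Sym^g(\Sigma)$: any discrepancy between $[N\omega_0]$ and $\pi_*[\nu^{\times g}]$ is $i\partial\bar\partial$-exact globally, and can be absorbed into the construction by a further global $i\partial\bar\partial h$-correction added to $\psi$ before interpolating. This combined positivity-and-cohomology bookkeeping is the technical heart of the proof; the remaining patching is a routine partition-of-unity argument, and the resulting $\omega$ satisfies the stated conclusions.
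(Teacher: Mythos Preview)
The paper does not prove this theorem; it is quoted from Perutz \cite{Perutz} and no argument is given. So there is no ``paper's proof'' to compare your sketch against, and the relevant question is simply whether your outline is correct.

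There is a genuine gap, and it lies exactly in the tension you yourself flag between the $N\gg 0$ positivity trick and the cohomology-class requirement. Your form $\omega$ is built to equal $\pi_*(\nu^{\times g})$ outside $\overline U$ and (essentially) $N\omega_0$ on $U'$. The difference $\omega-\pi_*(\nu^{\times g})$, viewed as a closed current, is then supported in $\overline U$, so $[\omega]-\pi_*[\nu^{\times g}]$ lies in the image of $H^2_c(U)\to H^2(\Sym^g(\Sigma))$. That image is \emph{not} zero in general (for $g=2$ the diagonal is a copy of $\Sigma$ and its Thom class survives), and the contribution grows with $N$. Your proposed fix, ``add a global $i\partial\bar\partial h$ to $\psi$ before interpolating,'' cannot repair this: adding $i\partial\bar\partial$ of anything with support in $U$ is cohomologically trivial, while adding a genuinely global $i\partial\bar\partial h$ changes $\omega$ outside $\overline U$ and destroys the agreement with $\pi_*(\nu^{\times g})$ there. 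So as written the two halves of the argument cannot both be satisfied.

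A second, related issue is that interpolating \emph{local} K\"ahler potentials $(1-\chi)\tilde F+\chi N\psi$ and then ``gluing via a partition of unity'' does not produce a well-defined form: changing $\tilde F$ or $\psi$ by a pluriharmonic function $u$ alters the result by $i\partial\bar\partial(\chi u)\neq 0$. The standard remedy is to start with $\omega_0$ already in the class $\pi_*[\nu^{\times g}]$, use the global $\partial\bar\partial$-lemma to write $\pi_*(\nu^{\times g})=\omega_0+i\partial\bar\partial\phi$ on $\Sym^g(\Sigma)\setminus\Delta$ for a single globally defined $\phi$, and set $\omega=\omega_0+i\partial\bar\partial\bigl((1-\chi)\phi\bigr)$. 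This is well defined and automatically in the right class, but now positivity on the collar is no longer handled by ``take $N$ large''; one needs actual control on $\phi$ and its derivatives near $\Delta$, which is where the real work in Perutz's argument lies.
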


From the attaching circles we can form two smoothly embedded $g$-dimensional tori $\TT_\al = \al_1 \times \cdots \times \al_g$ and $\TT_\b = \b_1 \times \cdots \times \b_g$ lying in the symmetric product $\Sym^g(\Sigma)$. Notice that $\TT_\al$ are $\TT_\b$ are Lagrangian submanifolds of $(\Sym^g(\Sigma), \omega)$.

\subsection{Intersection points and Kauffman states}
The intersection points $\TT_\al \cap \TT_\b$ of the $\alpha$- and $\beta$-tori of a doubly pointed Heegaard diagram $( \Sigma , \boa , \bob, z, w )$ associated to a knot projection can be interpreted combinatorially as follows.

Let $K$ be an oriented knot in $S^3$, and suppose that $V \subseteq \R^2$ is an oriented planar projection of $K$.
A \textit{domain} of $V$ is the closure of a connected component of its complement in $\R^2$. 
Let $Cr(V)$ denote the set of crossings in the projection $V$ and $D_0(V)$ the set of its domains. 
Choose a distinguished arc of the projection (in figures such an arc will be marked with the symbol $\times$) 
and denote by $D(V)$ the set of those domains which are disjoint from the marked arc. A \textit{Kauffman state} of the marked diagram $V$ is a bijection 
$\sigma : Cr(V) \to D(V)$ such that $c \in \sigma(c)$ for all $c \in Cr(V)$. 
Kauffman states are in obvious one to one correspondence with the intersection points of 
$\TT_\alpha\cap \TT_\beta$. 
A Kauffman state should be thought of as a choice of region for each crossing. 
In figures we will mark these regions with a black dot.
See \cref{fig:kauf_exm} for an example.

\begin{figure}
\includegraphics[width=0.2\textwidth]{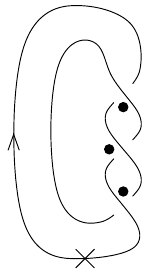}
\caption{One of three Kauffman states for the left-handed trefoil knot.}
\label{fig:kauf_exm}
\end{figure}

\subsection{The knot Floer chain complex}
Given a genus $g$, doubly pointed Heegaard diagram $(\Sigma, \mb{\alpha}, \mb{\beta}, z, w)$,
we can pick a symplectic form $\omega$ on $\Sym^g(\Sigma)$ as in Theorem \ref{perutzform}, a generic path $J_s\in \mathcal{J}(\Sym^g(\Sigma))$ of compatible almost-complex structures, and run the construction of Lagrangian Floer homology to get a differential on
\[ \widehat{CF}(\Sigma, \mb{\alpha}, \mb{\beta})= \bigoplus_{\x \in \TT_\alpha \cap \TT_\beta} \Z_2 \cdot \x  \ .\]
In doing so one would immediately run into some technical problems \cite{OS_manifolds}:
\begin{enumerate}
\item if $g> 2$ then $\pi_2(\Sym^g(\Sigma))=\Z$. A generator $S \in \pi_2(\Sym^g(\Sigma), \TT_\alpha)$  can be found as follows.  Take a hyperelliptic involution $\tau$ on $\Sigma$ and fix a basepoint $z \in \Sigma$, then $y \in \Sigma/\tau= S^2 \mapsto y+\tau(y)+(g-2)z \in \Sym^g(\Sigma) $ is a sphere representing $S$. 
\item Similarly, $\pi_2(\Sym^g(\Sigma), \TT_\alpha)= \pi_2(\Sym^g(\Sigma), \TT_\beta) = \Z$ for $g > 2$.
\end{enumerate} 
In particular both bubbling and boundary degenerations can occur. To fix this, one can choose a basepoint $z \in \Sigma\setminus \mb{\alpha} \cup \mb{\beta}$ and count holomorphic disks only in $\Sym^g(\Sigma- z)=\Sym^g(\Sigma)\setminus V_z$, where $V_z=z \times \Sym^{g-1}(\Sigma)=  \{ \x= x_1+ \dots+x_{g} \in \Sym^g(\Sigma) \ | \ x_i=z \text{ for some }i \}$. With this expedient we recover the conditions 
\begin{align*}
\pi_2(\Sym^g(\Sigma-z))=0 && \pi_2(\Sym^g(\Sigma-z), \TT_\alpha)= \pi_2(\Sym^g(\Sigma-z), \TT_\beta) = 0 \ , 
\end{align*}
and we can apply Floer's argument verbatim. We define the differential of $\widehat{CF}(\Sigma, \mb{\alpha}, \mb{\beta})$ to be
\begin{equation*}
\hatt{\p}\x= 
\sum_{\y \in \TT_\alpha \cap \TT_\beta}
\sum_{
\multunder{
\phi \in \pi_2\left( \x, \y \right) \\
n_z\left( \phi \right)= 0 \\
\mu\left( \phi \right) = 1 }}
\#\left( \frac{\cM\left( \phi \right)}{\RR} \right) \cdot \y \ ,
\end{equation*}
where $\pi_2\left( \x, \y \right)$ denotes the set of homotopy classes of disks connecting $\x$ to $\y$ (\text{i.e.} maps $u: D^2\simeq [0,1] \times\R \to \Sym^g(\Sigma)$  such that  $u \left( 0\times \R \right) \subseteq \T_{\boa}$, $u \left(  1 \times \R \right) \subseteq \T_{\bob}$, $\lim_{t\to - \infty } u(s+it)= \x$ and  $\lim_{t \to + \infty } u(s+it)= \y$), $\mu(\phi) \in \Z$ denotes  the expected dimension of the moduli space  $\cM\left( \phi \right)$ of $J_s$-holomorphic representatives in such a class, and $n_z(\phi)=\# \left( \phi^{-1}(V_z)\right)$  the algebraic intersection of a homotopy class with the divisor $V_z= z \times \Sym^{g-1}(\Sigma)$.

Notice that $(\widehat{CF}(\Sigma, \mb{\alpha}, \mb{\beta}), \partial)$ is independent from the choice of the basepoint $w$. In fact its homology does not depend on the knot \cite[Theorem 1.1]{OS_manifolds}: $H_*(\widehat{CF}(\Sigma, \mb{\alpha}, \mb{\beta}), \partial)= \Z_2$.  The choice of the second basepoint $w$ determines a filtration over $\widehat{CF}(\Sigma, \mb{\alpha}, \mb{\beta})$. This is the \textit{Alexander filtration} $A$ and it is characterized by the property 
\[A(\x) - A(\y)= n_w(\phi)- n_z(\phi) \]
where $\x$ and $\y \in \TT_\alpha \cap \TT_\beta$ denote two generators, and $\phi \in \pi_2(\x, \y)$. Looking at page $E_1$ of the associated spectral sequence we get a differential $\hat{\p}_K$ over $\widehat{CF}(\Sigma, \mb{\alpha}, \mb{\beta})$. One explicitly computes
\begin{equation*}
\hatt{\p}_K\x= 
\sum_{\y \in \TT_\alpha \cap \TT_\beta}
\sum_{\multunder{
\phi \in \pi_2\left( \x, \y \right) \\
n_w\left( \phi \right)= 0, \ n_z\left( \phi \right)= 0 \\
\mu\left( \phi \right) = 1}}
\#\left( \frac{\cM\left( \phi \right)}{\RR} \right) \cdot \y \ ,
\end{equation*}
\text{i.e.} we only count holomorphic disks that do not cross both the divisor $V_z$ and $V_w$. 

Another possibility is to count all $J_s$-holomorphic disks and record their intersection with both basepoints. Given a doubly pointed Heegaard diagram $(\Sigma, \boa, \bob, z, w)$ of a knot $K \subset S^3$, we can form the chain complex freely generated by its Heegaard states over the ring $\F[ U,V]/UV$
\begin{equation*}
CFK(K) \ceqq
\bdsum_{\x\in \TT_\al \cap \TT_\b}
\F[ U,V]/UV  \cdot \x \ .
\end{equation*}
This is equipped with the differential:
\begin{equation*}
\partial_K (\x)= \sum_{ \y \in \Tt_\alpha \cap \Tt_\beta}
\sum_{\multunder{\phi\in \pi_2\left( x,y \right) \\ \mu\left( \phi\right) = 1}}
\# \left( \frac{\cM\left( \phi \right)}{\RR} \right) U^{n_z\left( \phi \right)} V^{n_w\left( \phi \right)} \cdot 	\y \ .
\end{equation*}
In \cite{OS_manifolds} Ozsv\' ath and Szab\' o proved that the chain homotopy type of $CFK(K)$ does not depend on the choice of the doubly pointed  Heegaard diagram $(\Sigma, \boa, \bob, z, w)$ of $K$, nor on the generic path of almost complex structures $J_s \in \mathcal{J}(\Sym^g(\Sigma))$. We denote by $HFK(K)$ the homology of $CFK(K)$.

Imposing $U=0$ in the complex (instead of $UV=0$) we get a simplified version of the theory usually denoted by $HFK^-(K)$. Setting $U=V=0$ in $CFK(K)$ we get instead the homology of $\widehat{CFK}(K)=(\widehat{CF}(\Sigma, \mb{\alpha}, \mb{\beta}, z, w), \hat{\partial})$, usually denoted by $\widehat{HFK}(K)$.

\section{$\cA_\infty$-modules and type $D$-structures}
\subsection{$\cA_\infty$-algebras}
Let $R$ be a commutative ring with unit. We will often assume that $R$ has characteristic two, \text{i.e.} $1+1=0$.
An $\cA_\infty$-algebra $\cA$ is a graded $R$-module together with (grading-preserving) linear maps 
$\mu_{j }:\cA^{\tp j} \fromto \cA[2-j]$ defined for $j\geq 0$. These are required to satisfy the so called $\cA_\infty$-relations:  
\begin{equation*}
\sum_{i+j+\ell=n}\mu_{i+1+\ell} \circ (id_{\cA^{\otimes i}} \otimes \mu_j \otimes id_{\cA^{\otimes \ell}})=0 \ . 
\end{equation*}
Here $\cA\left[ 2-j \right]$ denotes the algebra $\cA$ with degree shifted by $2-j$. That is, degree $d+2-j$ elements of $\cA$ are declared to have grading $d$ in $\cA\left[ 2-j \right]$. (This guarantees that $\mu_1: \cA \to \cA$ drops the grading by one for example.) 

\begin{exr}
Write down the $\cA_\infty$-relations for $n=0,1,2$.  Prove that in an $\cA_\infty$-algebra $\partial \omega=0$ 
and $\partial^2a=\omega \cdot a + a \cdot \omega$, where we set $\omega=\mu_0\left( 1 \right)$, $\partial x= \mu_1\left( x \right)$, and $x\cdot y = \mu_2\left( x\tp y \right)$.
\end{exr}

$\cA_\infty$-relations can be understood graphically as follows. Consider trees which have $n+1$ leaves, a preferred leaf (called the root), and exactly two vertices.  Then each such tree $T$ represents a map $\mu_T :\cA^{\tp n} \to \cA$: the leaves of $T$ 
represent the inputs of $\mu_T$, the preferred leaf its output, and a vertex of valence $j$ represents the operation $\mu_j$. In this notation the $\cA_\infty$-relations say that for all $n \geq 1$
\begin{equation*}
\sum_T \mu_T(a_1 \otimes \dots \otimes a_n) = \sum_{i=1}^n \mu_{n+1}(a_1 \otimes \dots \otimes a_{i-1} \otimes  \omega \otimes a_{i}\otimes  \dots \otimes a_n )\ , 
\end{equation*}
where we set $\om = \mu_0\left( 1 \right)$ and
the sum on the left is extended to all such trees with 
$n+1$ leaves. An example for $n=4$ is shown in Figure \ref{fig:trees}. 

\begin{rmk} $\cA_\infty$-algebras generalize some classical algebraic structures. An $\cA_\infty$-algebra with $\mu_0=0$ and $\mu_j=0$ for $j \geq 2$ is a chain complex, a differential graded algebra (DGA) is an $\cA_\infty$-algebra with $\mu_0=0$ and $\mu_j=0$ for $j \geq 3$, while a graded associative algebra is simply an $\cA_\infty$-algebra with  $\mu_j=0$ for $j \not =2$.
\end{rmk}

If $\mu_0: R \to \cA$ is non-zero we say that $\cA$ is a \textit{curved} $\cA_\infty$-algebra. In this case, $\mu_0\left( 1 \right)=\omega$ is called the \textit{curvature} of $\mathcal{A}$. A curved $\cA_\infty$-algebra with $\mu_1=0$ and $\mu_j=0$ for $j \geq 3$  is just a graded algebra $\cA$ together with a preferred central element $\omega$, \text{i.e.} an element $\omega \in \cA$ such that $\omega \cdot x = x \cdot \omega$. We will call these objects curved associative algebras.

Another way to understand the $\cA_\infty$-relations is through the tensor algebra 
$T^*(\cA)= \bigoplus_{i=0}^\infty \mathcal{A}^{\otimes i}$. Notice that the maps $\mu_j$ ($j \geq 1$) sum up to a map $\mu:^\infty \mu_j: T^*(\cA) \to \mathcal{A}$. Define $\overline{D}: T^*(\cA) \to T^*(\cA)$ 
to be
\begin{equation}
\overline{D}(a_1 \otimes \dots \otimes a_n)= \sum_{j=1}^n \sum_{\ell=1}^{n-j+1} a_1 \otimes \dots \otimes \mu_j(a_\ell \otimes \dots \otimes a_{\ell+j-1})\otimes \dots \otimes a_n \ .
\label{eqn:bard}
\end{equation}
Then the $\cA_\infty$-relations can be written as $\mu \circ (\overline{D}+ i_\omega)=0$
where $\omega$ represents the curvature of $\cA$, and 
\begin{equation}
i_\omega(a_1 \otimes \dots  \otimes a_n)= \sum_{i=0}^{n+1} a_1 \otimes \dots \otimes a_{i-1} \otimes  \omega \otimes a_{i}\otimes  \dots \otimes a_n \ . 
\label{eqn:i}
\end{equation}
In what follows we will agree that $\overline{D}(1)=\mu(1)=0$ and $i_\omega(1)=\omega$, where $1\in R$ denotes the unit of the ground ring $R \subset T^*(\cA)=R\oplus \cA \oplus (\cA \otimes \cA) \oplus \dots$
 
\begin{figure}
\includegraphics[height=0.2\textheight]{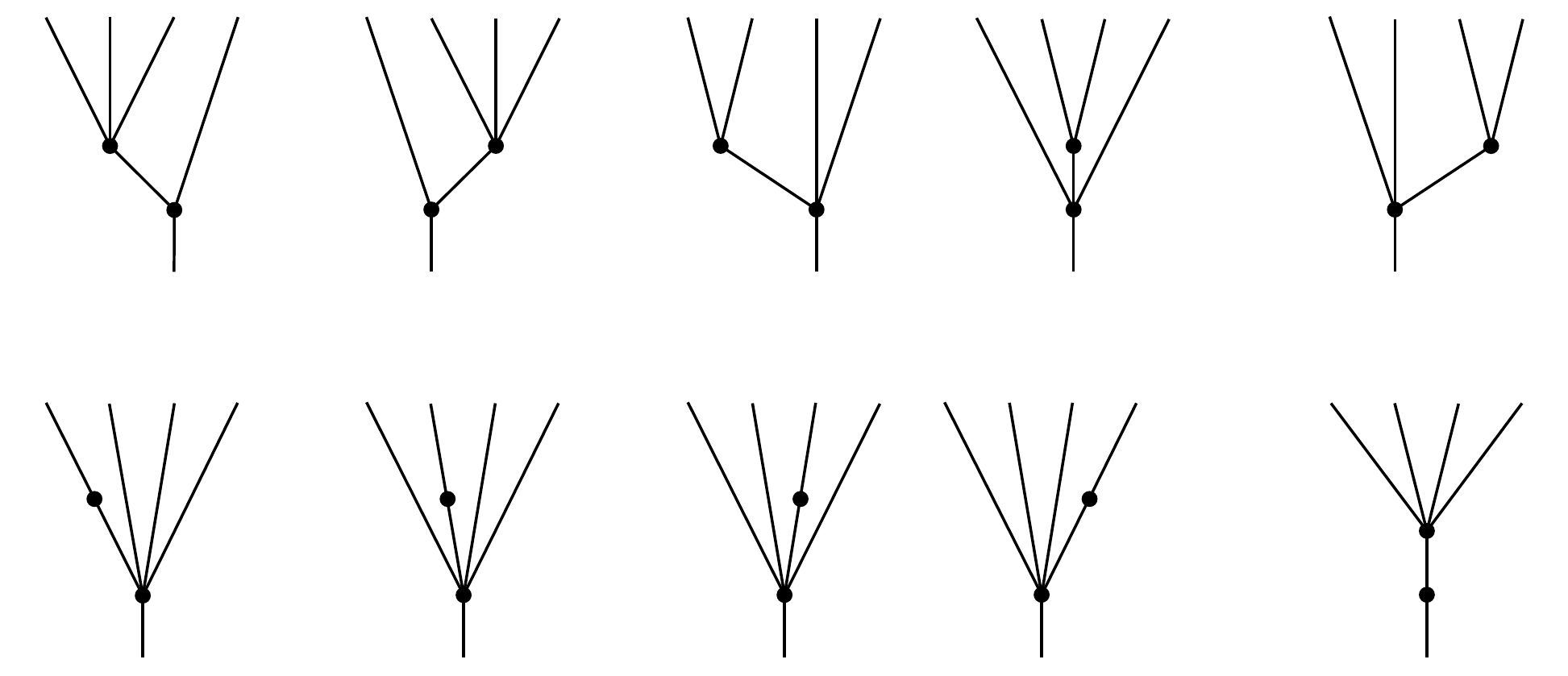}
\hfill
\includegraphics[height=0.2\textheight]{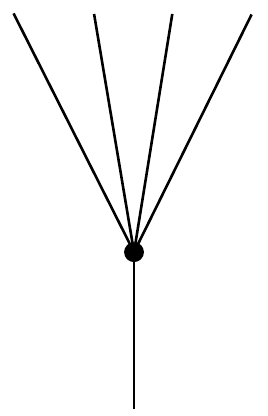}
\caption{Operational trees for $n=4$. 
The rightmost tree on the first line represents $\mu_3( a_1\otimes a_2\otimes \mu_2( a_3\otimes a_4 ) )$. Notice that by collapsing the edge in between the two nodes of any tree on the left we get the four-legged tree on the right. 
In other words all operational trees are obtained from the tree on the right by blowing-up the central node in all possible fashions.}

\label{fig:trees}
\end{figure} 

\subsection{$\cA_\infty$-modules} 
An $\cA_\infty$-module over an $\cA_\infty$-algebra $\cA$ is a graded, finitely generated $R$-module $M$ together with linear maps $m_{j }: M\tp \cA^{\tp j-1} \fromto M[2-i]$ defined for $j= 1, 2 , \dots$ such that
\begin{equation}
\label{eqn:a_infinity_module}
0 = \sum_{i + j = n+1} m_i \circ ( m_j \otimes id_{\cA^{\otimes i}}) + \sum_{i + j + k = n} m_{i  + 1+k}\circ (id_{\cA^{\otimes i}} \otimes \mu_j \otimes id_{\cA^{\otimes k}})
\end{equation}
for all $n \geq 1$. These are the $\cA_\infty$-relations for modules. As in the case of algebras, $\cA_\infty$-relations of modules can be interpreted graphically. Consider trees which have $n+1$ inputs and a single output, where the furthest left input is privileged. Then each such tree $T$ represents a map $m_T: M \otimes \cA^{\tp n} \fromto M$. Again $\cA_\infty$-relations say that for all $n \geq 0$
\begin{equation*}
\sum_T m_T(x \otimes a_1 \otimes \dots \otimes a_n) =\sum_{i=1}^n m_{n+1}( x \otimes a_1 \otimes \dots \otimes a_{i-1} \otimes  \omega \otimes a_{i}\otimes  \dots \otimes a_n ) 
\end{equation*}
where the sum is extended over all above mentioned trees. 

Alternatively, one can interpret the $\cA_\infty$-relations by means of the tensor algebra $T^*(\cA)$ as follows. Define  $m:T^*(\cA) \otimes M \to M$ by summing up the $m_j$ maps, and consider the canonical co-multiplication $\Delta : T^*(\cA) \to T^*(\cA) \otimes T^*(\cA)$
\[ \Delta(a_1 \otimes \dots \otimes a_n) = \sum_{i+j=n} (a_1 \otimes \dots a_i) \otimes (a_{i+1} \otimes \dots \otimes a_n) . \]
Then the $\cA_\infty$-relations can be graphically interpreted as insisting that
\[
\begin{tikzpicture}[baseline=(center.base)]
  \node at (0,-2) (center) {};
  \node at (0,0) (tc) {};
  \node at (2,0) (tr) {};
  \node at (1,-1) (Delta) {$\Delta$};
  \node at (0,-2) (ma) {$m$};
  \node at (0,-3) (mb) {$m$};
  \node at (0,-4) (bc) {};
  \draw[taar] (tr) to (Delta);
  \draw[taar] (Delta) to (ma);
  \draw[taar] (Delta) to (mb);
  \draw[amar] (tc) to (ma);
  \draw[amar] (ma) to (mb);
  \draw[amar] (mb) to (bc);
\end{tikzpicture}
\ \ \ + \ \ \
\begin{tikzpicture}[baseline=(center.base)]
  \node at (0,-1.5) (center) {};
  \node at (0,0) (tc) {};
  \node at (2,0) (tr) {};
  \node at (1,-1) (D) {$\overline{D}$};
  \node at (0,-2) (m) {$m$};
  \node at (0,-3) (bc) {};
  \draw[taar] (tr) to (D);
  \draw[taar] (D) to (m);
  \draw[amar] (tc) to (m);
  \draw[amar] (m) to (bc);
\end{tikzpicture}
\ \ \ + \ \ \
\begin{tikzpicture}[baseline=(center.base)]
  \node at (0,-1.5) (center) {};
  \node at (0,0) (tc) {};
  \node at (2,0) (tr) {};
  \node at (1,-1) (D) {$i_\omega$};
  \node at (0,-2) (m) {$m$};
  \node at (0,-3) (bc) {};
  \draw[taar] (tr) to (D);
  \draw[taar] (D) to (m);
  \draw[amar] (tc) to (m);
  \draw[amar] (m) to (bc);
\end{tikzpicture}
\ \ \ =0
\]
where dashed arrows represent module elements, 
and double arrows indicate elements of the tensor algebra $T^*(\cA)$. 
Recall $\bar D$ and $i_\om$ are defined in \eqref{eqn:bard} and \eqref{eqn:i}.
  
\subsection{Type $D$-structures} 
Let $\cA$ be an $\cA_\infty$-algebra with curvature $\omega$, and $X$ a graded, finitely generated $R$-module. Given a linear map $\delta: X\fromto \cA\left[ 1 \right] \tp X$ we can form maps $\delta^j: X \to \cA\left[ 1 \right]^{\otimes j} \tp X$ for $j=1, 2, \dots $ by simply iterating $\delta$ on the rightmost argument. We say that $(X, \delta)$ forms a ($\omega$-curved) type $D$-structure provided that 
the structure equation is satisfied:
\begin{equation}
\sum_{j=1}^\infty (\mu_j \otimes \id_X) \circ \delta^j = \omega \otimes \id_X \ . 
\label{eqn:structure_eqn_D}
\end{equation}
In terms of the structure map $\mu = T^*(\cA) \to \cA$ of the $\cA_\infty$-algebra $\cA$ this is the same as
requiring $(\mu \otimes id_X) \circ \overline{\delta}= \omega \otimes id_X$, where $\overline{\delta}:X \to  T^*(\cA) \otimes X$ is defined by $\overline{\delta}( x)=\sum_{j=0}^\infty \delta^j(x)$.
In what follows we will be mainly interested in $D$-structures over curved associative algebras. In this case the structure equation simplifies to 
\[(\mu_2 \otimes \id_X)  \circ (\id_\cA \otimes \delta)  \circ \delta=\omega \otimes \id_X  \ .\]

\begin{lem}\label{explicitcomputation}Let $(X, \delta)$ be a type $D$-structure, then
\begin{align*}
\begin{tikzpicture}[baseline=(center.base)]
  \node at (0,0) (tc) {};
  \node at (0,-1) (delta) {$\overline{\delta}$};
  \node at (-1,-2) (Delta) {$\Delta$};
  \node at (-2,-3) (bll) {};
  \node at (-1,-3) (bl) {};
  \node at (0,-3) (bc) {};
  \node at (0,-1.5) (center) {};
  \draw[dmar] (tc) to (delta);
  \draw[dmar] (delta) to (bc);
  \draw[taar] (delta) to (Delta);
  \draw[taar] (Delta) to (bll);
  \draw[taar] (Delta) to (bl);
\end{tikzpicture}
&& = &&
\begin{tikzpicture}[baseline=(center.base)]
  \node at (0,0) (tc) {};
  \node at (0,-1) (deltaa) {$\overline{\delta}$};
  \node at (0,-2) (deltab) {$\overline{\delta}$};
  \node at (-2,-3) (bll) {};
  \node at (-1,-3) (bl) {};
  \node at (0,-3) (bc) {};
  \node at (0,-1.5) (center) {};
  \draw[dmar] (tc) to (deltaa);
  \draw[dmar] (deltaa) to (deltab);
  \draw[dmar] (deltab) to (bc);
  \draw[taar] (deltaa) to (bll);
  \draw[taar] (deltab) to (bl);
\end{tikzpicture}
&& \text{and} &&
\begin{tikzpicture}[baseline=(center.base)]
  \node at (0,0) (tc) {};
  \node at (0,-1) (delta) {$\overline{\delta}$};
  \node at (-1,-2) (D) {$\overline{D}$};
  \node at (-2,-3) (bl) {};
  \node at (0,-3) (bc) {};
  \node at (0,-1.5) (center) {};
  \draw[dmar] (tc) to (delta);
  \draw[dmar] (delta) to (bc);
  \draw[taar] (delta) to (D);
  \draw[taar] (D) to (bl);
\end{tikzpicture}
&& = &&
\begin{tikzpicture}[baseline=(center.base)]
  \node at (0,0) (tc) {};
  \node at (0,-1) (delta) {$\overline{\delta}$};
  \node at (-1,-2) (D) {$i_\omega$};
  \node at (-2,-3) (bl) {};
  \node at (0,-3) (bc) {};
  \node at (0,-1.5) (center) {};
  \draw[dmar] (tc) to (delta);
  \draw[dmar] (delta) to (bc);
  \draw[taar] (delta) to (D);
  \draw[taar] (D) to (bl);
\end{tikzpicture} \ \ .
\end{align*}
\label{lem:d_structure_lemma}
\end{lem}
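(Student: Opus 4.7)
The plan is to unpack $\overline{\delta} = \sum_{j \ge 0} \delta^j$ and interpret $\overline{D}$, $i_\omega$, and $\Delta$ as combinatorial operations on tuples in $T^*(\cA)$. The first identity is then essentially coassociativity of the ``iterate $\delta$'' operation, and the second combines that coassociativity with the structure equation \eqref{eqn:structure_eqn_D} to trade a $\mu$-insertion for an $\omega$-insertion.

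For the first identity, I would expand the right-hand side as $\sum_{j,k \ge 0} (\id_{\cA^{\otimes j}} \otimes \delta^k) \circ \delta^j(x)$. The defining iteration of $\delta^{j+k}$ on the rightmost factor gives $(\id_{\cA^{\otimes j}} \otimes \delta^k) \circ \delta^j = \delta^{j+k}$, now viewed in the tensor factorization $\cA^{\otimes j} \otimes \cA^{\otimes k} \otimes X$. Expanding the left-hand side as $\sum_n (\Delta \otimes \id_X)(\delta^n(x))$ and using the de-concatenation formula for $\Delta$ yields the same sum, re-indexed by $(j,k)$ with $n = j+k$. The two sums match term by term.

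For the second identity, the key observation is that $\overline{D} = m \circ (\id \otimes \mu \otimes \id) \circ \Delta^{(3)}$, where $\Delta^{(3)} : T^*(\cA) \to T^*(\cA)^{\otimes 3}$ is the iterated de-concatenation coproduct and $m$ is concatenation. The convention $\mu(1) = 0$ kills the empty-middle-block terms of $\Delta^{(3)}$, recovering the definition in \eqref{eqn:bard}. Applying the first identity twice, $(\Delta^{(3)} \otimes \id_X) \circ \overline{\delta}$ equals $(\id \otimes \id \otimes \overline{\delta}) \circ (\id \otimes \overline{\delta}) \circ \overline{\delta}$. Applying $\mu$ to the middle $T^*(\cA)$-factor and commuting it past the outer $\overline{\delta}$, the structure equation $(\mu \otimes \id_X) \circ \overline{\delta} = \omega \otimes \id_X$ applied at the inner $X$-factor collapses the middle into an $\omega$. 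Concatenating and using the first identity in reverse recovers $m \circ (\id \otimes \omega \otimes \id) \circ (\Delta \otimes \id_X) \circ \overline{\delta}$, which by \eqref{eqn:i} is $(i_\omega \otimes \id_X) \circ \overline{\delta}$.

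The hard part is the bookkeeping: tracking which $\id$ sits in which tensor slot under repeated coassociativity, and verifying that the $\mu(1) = 0$ convention really cancels the length-zero contributions from $\Delta^{(3)}$ so that the triple-coproduct expression for $\overline{D}$ agrees with Definition \eqref{eqn:bard} without spurious extra terms. No additional homological input is required; everything reduces to formal manipulation modulo the structure equation.
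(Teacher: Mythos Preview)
Your argument for the first identity is exactly the paper's: both reduce to the iteration formula $\delta^{i+j}=(\id_{\cA^{\otimes j}}\otimes\delta^i)\circ\delta^j$ and match terms after expanding $\Delta$ as de-concatenation.

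For the second identity you take a genuinely different route. The paper fixes a basis $\x_1,\dots,\x_m$ of $X$, writes out the structure constants $\xi_{ij}$, expands $\overline{\delta}(\x_i)$ term by term, applies $\overline{D}$ by hand, and then invokes the coordinate form of the curvature equation $\sum_\ell \xi_{i\ell}\xi_{\ell j}=\delta_{ij}\,\omega$ to collapse each sum; moreover it only carries this out in the special case of a curved \emph{associative} algebra, explicitly framing it as ``an example of direct computation.'' Your proof instead stays coordinate-free: you recognise $\overline{D}=m\circ(\id\otimes\mu\otimes\id)\circ\Delta^{(3)}$, use the first identity twice to replace $\Delta^{(3)}\circ\overline{\delta}$ by a triple iterate of $\overline{\delta}$, commute $\mu$ past the outermost $\overline{\delta}$, and feed the middle $\overline{\delta}$ into the structure equation $(\mu\otimes\id_X)\circ\overline{\delta}=\omega\otimes\id_X$ to produce an $\omega$-insertion, which you then identify with $i_\omega$ via $i_\omega=m\circ(\id\otimes\omega\otimes\id)\circ\Delta$. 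This is correct and in fact more general than what the paper writes down, since nothing in your argument uses $\mu_j=0$ for $j\neq 2$. The trade-off is readability: the paper's coordinate computation is longer but makes the mechanism visible to a reader meeting type $D$-structures for the first time, whereas your argument is cleaner but demands fluency with the coalgebra bookkeeping you flag at the end. The one point worth spelling out explicitly in your write-up is the commutation step---that $\mu$ on the middle tensor slot slides past the third $\overline{\delta}$ on the $X$ slot because they act on disjoint factors---since that is where the structure equation actually gets applied.
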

\begin{proof} The first identity is a consequence of the fact that $\delta^{i+j}=(id_{\cA^{\otimes j}} \otimes \delta^i) \circ \delta^j$. The second identity is in fact equivalent to the structure equation. We give a proof in the case of curved associative algebras just to provide an example of direct computation.

Let $\x_1, \dots , \x_m$ be a basis of $X$. Write:
\[\delta (\x_i)= \sum_{j=1}^m \xi_{ij} \otimes \x_j  \]
for some constants $\xi_{ij} \in \cA$. Notice that in linear coordinates the structure equation $(\mu_2 \otimes \id_X)  \circ (\id_\cA \otimes \delta)  \circ \delta=\omega \otimes \id_X$ translates to 
\begin{equation}
\sum_{\ell=1}^m \xi_{i\ell} \cdot \xi_{\ell j}= \begin{cases} \ \omega \ \ \ \text{ if } i=j \\ \  0 \ \ \ \text{ otherwise } \end{cases} \ .
\label{eqn:curvature}
\end{equation}
Using the coefficients $\xi_{ij}$ we obtain the following expression for $\overline{\delta}(\x_i)$
\[\overline{\delta}(\x_i)= 1 \otimes \x_i
+ \sum_{j}\xi_{ij} \otimes \x_j
+ \sum_{j, \ell}^m \xi_{ij} \otimes \xi_{j \ell } \otimes \x_\ell 
+ \sum_{j, \ell , k}^m \xi_{ij} \otimes \xi_{j \ell } \otimes \xi_{\ell k} \otimes \x_k + \dots
\]
Thus, applying $\overline{D}$ gives us
\begin{align*}\overline{D}\otimes \id_X (\overline{\delta}(\x_i)) 
&= \sum_{j, \ell} \xi_{ij} \cdot \xi_{j \ell } \otimes  \x_\ell \\
&+ \sum_{j, \ell, k } \xi_{ij} \cdot \xi_{j \ell } \otimes \xi_{\ell k} \otimes \x_k + \sum_{j, \ell , k} \xi_{ij} \otimes \xi_{j \ell } \cdot \xi_{\ell k} \otimes \x_k + \dots\\
&= \sum_{\ell}\left( \sum_{j} \xi_{ij} \cdot \xi_{j \ell }\right) \otimes  \x_\ell \\
&+ \sum_{ \ell, k }\left( \sum_{j}  \xi_{ij} \cdot \xi_{j \ell }\right) \otimes \xi_{\ell k} \otimes \x_k 
+ \sum_{j, k} \xi_{ij} \otimes \left( \sum_{\ell} \xi_{j \ell } \cdot \xi_{\ell k}\right) \otimes \x_k + 
\cdots 
\end{align*}
Imposing the curvature equation \eqref{eqn:curvature}, we conclude that 
\begin{align*}\overline{D} \otimes \id_X \left( \overline{\delta}(\x_i) \right) 
&= \omega \otimes  \x_i + \sum_{k }\omega \otimes \xi_{i k} \otimes \x_k + \sum_{ k} \xi_{ij} \otimes \omega \otimes \x_k + \cdots\\
&= i_\omega \otimes id_X(1 \otimes \x_i) + i_\omega \otimes id_X(\delta(\x_i))+ \cdots
\end{align*}
and we are done.
\end{proof}

\subsection{Box tensor product}
There is a natural pairing between $\cA_\infty$-modules and type $D$-structures. 
Suppose that $X$ is a type $D$-structure, and $M$ an $\cA_\infty$-module 
over a curved associative algebra $\cA$. 
Then we can equip the tensor product $M \tp_R X$ with the endomorphism
\begin{equation*}
D = \sum_{j = 0}^\infty
\left( m_{j+1} \tp \id_X \right) \comp \left( \id_M \tp \dd^j \right) \ .
\end{equation*}
Notice that this might not be well-defined since the sum can be infinite. 
This is not the case if, for example, either $M$ or $X$ is \emph{operationally bounded}, 
\text{i.e.} $m_j \equiv 0$ or $\delta^j \equiv 0$ for large values of the indices.

\begin{thm}
$\left( M\tp_R X , D \right)$ is a chain complex,
\text{i.e.} $D^2=0$. 
\end{thm}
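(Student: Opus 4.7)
The plan is to repackage $D$ as the single composition $D = (m \otimes \id_X) \circ (\id_M \otimes \overline{\delta})$, where $\overline{\delta} : X \to T^*(\cA) \otimes X$ bundles the iterates $\delta^j$ and $m : M \otimes T^*(\cA) \to M$ bundles the module operations $m_j$. In this form, $D^2$ amounts to applying $\overline{\delta}$ to the $X$-factor twice with an intermediate $m$; since $m$ touches only the $M$ and $T^*(\cA)$ factors while the second $\overline{\delta}$ touches only $X$, the two operations commute, so $D^2$ equals $(m \otimes \id_X) \circ (m \otimes \id_{T^*(\cA)} \otimes \id_X) \circ (\id_M \otimes (\id_{T^*(\cA)} \otimes \overline{\delta}) \circ \overline{\delta})$.

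Next, I apply the first identity of Lemma \ref{lem:d_structure_lemma} to replace $(\id_{T^*(\cA)} \otimes \overline{\delta}) \circ \overline{\delta}$ by $(\Delta \otimes \id_X) \circ \overline{\delta}$, which turns $D^2$ into $[m \circ (m \otimes \id_{T^*(\cA)}) \circ (\id_M \otimes \Delta)] \otimes \id_X$ precomposed with $\id_M \otimes \overline{\delta}$. At this point the graphical $\cA_\infty$-module relation stated just above Lemma \ref{lem:d_structure_lemma} applies directly, identifying the bracketed operator with $m \circ (\id_M \otimes \overline{D}) + m \circ (\id_M \otimes i_\omega)$ (all signs are plus, as is standard under the running characteristic-two hypothesis on $R$). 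After the precomposition with $\id_M \otimes \overline{\delta}$, these two terms become $(m \otimes \id_X) \circ (\id_M \otimes ((\overline{D} \otimes \id_X) \circ \overline{\delta}))$ and $(m \otimes \id_X) \circ (\id_M \otimes ((i_\omega \otimes \id_X) \circ \overline{\delta}))$, which the second identity of Lemma \ref{lem:d_structure_lemma} forces to agree. In characteristic two the two equal terms cancel, yielding $D^2 = 0$.

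The conceptual content is carried entirely by the two identities of Lemma \ref{lem:d_structure_lemma} together with the $\cA_\infty$-module relations for $M$; no new input is required. The one delicate point worth flagging is the convergence of the infinite sums defining $D$, $\overline{\delta}$, and the composites above, which is precisely where the operational boundedness hypothesis on $M$ or $X$ intervenes: it ensures that at each stage only finitely many terms contribute, so the reassociations and substitutions performed above are honest equalities rather than merely formal manipulations of power series.
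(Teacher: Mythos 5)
Your proposal is correct and follows essentially the same route as the paper: rewrite $D^2$ via the co-associativity identity $(\id\otimes\overline{\delta})\circ\overline{\delta}=(\Delta\otimes\id_X)\circ\overline{\delta}$ from Lemma \ref{lem:d_structure_lemma}, apply the $\cA_\infty$-module structure equation \eqref{eqn:a_infinity_module} to trade $m\circ(m\otimes\id)\circ\Delta$ for the $\overline{D}$ and $i_\omega$ terms, and then invoke the second identity of the lemma so the two resulting terms cancel in characteristic two. Your explicit remark about operational boundedness is a welcome addition, but the argument itself is the paper's diagrammatic proof written out in formulas.
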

\begin{proof}
By the definition of $\overline{\delta}$ and $\Delta$, we can write:
\begin{equation*}
D^2 = 
\begin{cd}
\arrow[dashed]{dd}&
\arrow[dashed]{d}\\
& \barr{\dd} \arrow[dashed]{dd}\arrow[Rightarrow]{dl}\\
m \arrow[dashed]{dd}&\\
& \barr{\dd} \arrow[Rightarrow]{dl}\arrow[dashed]{dd}\\
m\arrow[dashed]{d}&\\
\ & \
\end{cd} 
\ \ \ = \ \ \
\begin{cd}
\arrow[dashed]{ddd} && \arrow[dashed]{d}\\
&& \barr{\dd} \arrow[Rightarrow]{dl}\arrow[dashed]{dddd}\\
& \Delta \arrow[Rightarrow]{dl}\arrow[Rightarrow]{ddl}& \\
m\arrow[dashed]{d}&&\\
m\arrow[dashed]{d}&&\\
\ && \
\end{cd}
\end{equation*}
Now according to the structure equation \eqref{eqn:a_infinity_module}
for the $\cA_\infty$ module $M$ we can write
\begin{equation*}
\begin{cd}
\arrow[dashed]{ddd} && \arrow[dashed]{d}\\
&& \barr{\dd} \arrow[Rightarrow]{dl}\arrow[dashed]{dddd}\\
& \Delta \arrow[Rightarrow]{dl}\arrow[Rightarrow]{ddl}& \\
m\arrow[dashed]{d}&&\\
m\arrow[dashed]{d}&&\\
\ && \
\end{cd}
\ \ \ =  \ \ \ 
\begin{cd}
\ \arrow[dashed]{ddd} & & \arrow[dashed]{d} \\
&& \barr{\dd} \arrow[Rightarrow]{dl}\arrow[dashed]{ddd}\\
& \barr{D} \arrow[Rightarrow]{dl}&\\
m\arrow[dashed]{d}&&\\
\ && \
\end{cd}
+
\begin{cd}
\ \arrow[dashed]{ddd} & & \arrow[dashed]{d} \\
&& \barr{\dd} \arrow[Rightarrow]{dl}\arrow[dashed]{ddd}\\
& i_\om \arrow[Rightarrow]{dl}&\\
m\arrow[dashed]{d}&&\\
\ && \
\end{cd}
= 0
\end{equation*}
where this last expression is $0$ by Lemma \ref{lem:d_structure_lemma}.
\end{proof}

We define  $N \boxtimes X =(N \otimes_R X, \delta)$ to be the box tensor product of $N$ and $X$.

\subsection{Flat type $D$-structures as chain complexes} A type $D$-structure $(X, \delta)$ 
over a curved associative algebra $(\cA, \omega)$ is called \textit{flat} if $\omega=0$. 
A flat type $D$-structure can be seen as a chain complex: extend the coefficients of $X$ by taking $X_\cA= \cA \otimes X$ and set $\partial= (\mu_2 \otimes id_X)\circ( id_\cA \otimes  \delta)$. If $X$ is free with basis $B$ then: 
\[X_\cA=\bigoplus_{\x \in B} \cA \cdot \x \ \ \ \ \ \text{ and } \ \ \ \ \ \partial(\x)=\sum_{\y \in B}   \xi_{\x, \y} \cdot \y \ , \]
where $\xi_{\x, \y}\in \cA$ denote the \textit{structure constants} of $\delta$, \text{i.e.}  
\[\delta(\x)=\sum_{\y\in B}   \xi_{\x, \y} \otimes \y \ . \] 
In what follows we will tacitly consider flat type $D$-structures as chain complexes in the sense specified above.  

\begin{rmk}
In general, if $\omega\not=0$ one has $\partial^2(x)=\omega \cdot x$. To fix this defect and get $\partial^2=0$ one can take $X_\cA= X \otimes \cA / I \cdot X \otimes \cA$ as a module over $\cA_I=\cA/I\cA$, where $I \subset \cA$ denotes the (principal) ideal generated by $\omega$.
\end{rmk}

\subsection{Curved $DA$-bimodules} Let $(\cA, \omega_{\text{new}})$ and $(\cB, \omega_{\text{old}})$ be two curved associative algebras possibly over two different rings $R_1$ and $R_2$. A type $DA$-bimodule $N$ is a graded bimodule over $R_1$ and $R_2$ (acting respectively on the left and the right) together with linear maps
\begin{equation*}
\dd_{1 + j}: N\tp_{R_2} \cB^{\tp j} \fromto \cA \tp_{R_1} N
\end{equation*}
defined for $j \geq 0$. These are required to satisfy the structure equations:
\begin{equation*}
\begin{cd}
& \arrow[dashed]{dd}&\arrow[Rightarrow]{d}\\
&& \barr{D}\arrow[Rightarrow]{dl}\\
& \dd \arrow{dl}\arrow[dashed]{d}&\\
\ & \ &
\end{cd}
\ \ + \ \
\begin{tinycd}
& \arrow[dashed]{dd}&\arrow[Rightarrow]{d}\\
&& \Delta\arrow[Rightarrow]{dl}\arrow[Rightarrow]{ddl}\\
& \dd \arrow{ddl}\arrow[dashed]{d}&\\
& \dd \arrow{dl}\arrow[dashed]{dd}&\\
\mu_2\arrow{d} &  & \\
\ & \ &
\end{tinycd}
\ \ + \ \
\begin{cd}
&\arrow[dashed]{dd}&
\arrow[Rightarrow]{d}\\
&& i_{\om_{\text{old}}} \arrow[Rightarrow]{dl}\\
&\dd \arrow[dashed]{d}\arrow{dl}& \\
\ &\ &
\end{cd}
\ \ \ = \ \ \ 0
\end{equation*}
\begin{equation*}
\begin{cd}
& \arrow[dashed]{d}\\
& \dd_2 \circ \left( \id \tp \om_{\text{\text{old}}} \right)\arrow{dl}\arrow[dashed]{d} \\
\ & \
\end{cd}
\ + \
\begin{cd}
& \arrow[dashed]{d}\\
& \dd_1\arrow[dashed]{d}\arrow{ddl}\\
& \dd_1\arrow[dashed]{dd}\arrow{dl}\\
\mu_2 \arrow{d} & \\
\ & \
\end{cd}
\ \ \ = \ \ \ \om_{\text{new}} \tp \id
\end{equation*}
where $\delta=\sum_{j=1}^{\infty} \delta_{j}$. Here double arrows denote elements of the tensor algebra $T^*(\cB)$, dashed arrows elements of $N$, and plain arrows elements of $\cA$.

\subsection{Box tensor product and $DA$-bimodules}
Just as in the case of type $D$-structures and $\cA_\infty$-modules, one can pair type $D$-structures with $DA$-bimodules, and $DA$-bimodules with one another.

Let $\cA$ and $\cB$ be $\cA_\infty$-algebras, $(X, \delta_X)$  a $D$-structure over $\cB$, and $(N, \delta_{1}, \delta_2, \dots)$ a $DA$-bimodule over $\cA$ and $\cB$. On the tensor product $N \otimes X$ we define   
\begin{equation}
\delta= \sum_{j=0}^{+\infty} (\delta_{1+j} \otimes id_X) \circ (id_N \otimes \delta_X^j) \ .
\label{eqn:d_structure_map_on_tensor_product}
\end{equation}
This gives a map $\delta: N \otimes X \to \cA \otimes N \otimes X$. As in \Cref{lem:structure_map} one can check that $\delta$ is the structure map of a type $D$-structure on $N \otimes X$. We define the type $D$-structure $N \boxtimes X =(N \otimes X, \delta)$ to be the box tensor product of $N$ and $X$.

Similarly, we can pair $DA$-bimodules with one another. Let $(M, \delta_{1}^M, \delta_2^M, \dots)$ be a $DA$-bimodule over $\cA$ and $\cC$, and $(N, \delta_{1}^N, \delta_2^N, \dots)$ a $DA$-bimodule over $\cC$ and $\cB$. Then we can endow $M\otimes N$ with the $DA$-bimodule map $\delta^{M\otimes N}_{1+*}: N \tp T^*(\mathcal{B}) \to \mathcal{A}[1] \otimes N$ defined graphically by
\begin{equation*}
\begin{cd}
& \arrow[dashed]{dd}&\arrow[dashed]{d}&\arrow[Rightarrow]{dl}\\
&& \dd^N \arrow[Rightarrow]{dl}\arrow[dashed]{dd}& \\
& \dd^M \arrow{dl}\arrow[dashed]{d} && \\
\ & \ & \ &
\end{cd}
\end{equation*}
where $\dd^M= \sum_{j=1}^\infty \delta_j^M$ and $\dd^N= \sum_{j=1}^\infty \delta_j^N$. The resulting $DA$-bimodule (over $\cA$ and $\cB$) is the box tensor product of $M$ and $N$ and will also be denoted by $M \boxtimes N$.

\section{Partial Kauffman states}
\label{sec:partial_kauffman_states}
Let $K \subset \R^3$ be a knot. Denote by $\pi(x,y,z)=(x,z)$ the projection on the $xz$-plane and by $h(x,y,z)=z$ the height function. Suppose that $\restr{\pi}{K}$ is a regular projection and that $\restr{h}{K}$ is a Morse function with all local maxima at $h=+10$ and global minimum at $h=-10$. Fix $t\in ( -10 , 10 )$ and consider the plane $z=t$. This intersects $K$ transversely in $2n$ points and splits it in two parts: $K_{[t,+\infty)}=K \cap \{z\geq t\}$ the part lying above the plane, and $K_{( -\infty, t]}=K \cap \{z\leq t\}$ the part lying below it. Recall our scope is to associate to $K_{[t,+\infty)}$ a type $D$-structure $DFK(K_{[t,+\infty)})$ and to $K_{(-\infty,t]}$ an $\cA_\infty$-module $AFK(K_{(-\infty,t]})$ so that 
\[CFK(K)= AFK(K_{(-\infty,t]}) \boxtimes DFK(K_{[t,+\infty)})  \ . \]
In fact,  we associate to the portion of $K$ lying in between two parallel planes $K_{[t_1, t_2]}=K \cap \{t_1 \leq z\leq t_2\}$ a $DA$-bimodule $DAFK(K_{[t_1,t_2]})$ so that
\[DFK(K_{[t_1, +\infty)})= DAFK(K_{[t_1,t_2]}) \boxtimes DFK(K_{[t_2,+\infty)})  \ . \]
In this section we describe the generators of the vector spaces underlying $DFK$, $DAFK$ and $AFK$ in terms of combinatorial data associated to the projection.   

\subsection{Upper Kauffman states}
\label{sec:upperkauffman}
Let us start with $DFK(K_{[t,+\infty)})$. Consider the projection $D_{[t,+\infty)}= \pi\left( K \right) \cap \R \times \cop{t,+\infty}$  of the portion of $K$ lying above the plane $z=t$ on the plane $y=0$. This is the \textit{upper diagram} of the slice $z=t$. Choose $t \in (-10,10)$ so that there are no crossings or local minima at level $t$.

A \emph{local state}, is a subset of $n$ elements of the set $\left\{ 1 , \cdots , 2n-1 \right\}$. 
This corresponds to choosing $n$ of the $2n-1$ bounded intervals separated by the strands on the line $z = t$. 
In figures we will denote the choice of such spaces with a black bar. 
See \cref{fig:upper_exm} for an example.

\begin{figure}
\includegraphics[width=0.9\textwidth]{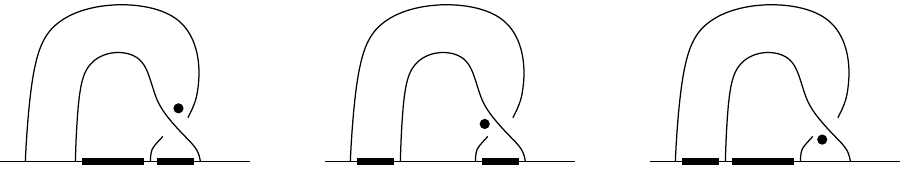}
\caption{The three upper Kauffman states for this upper diagram. 
Note that this is the upper diagram from the projection of the left-handed
trefoil of \Cref{fig:kauf_exm}.}
\label{fig:upper_exm}
\end{figure}

An \emph{upper Kauffman state} is a pair $\left( \k , x \right)$, where $x$ is an local state, and $\k$ is a function associating to each of the crossings in the upper diagram a connected component of $\R \times [t, +\infty) \setminus D_{[t,+\infty)}$. For an upper Kauffman   state we require the following conditions to be met:
\begin{enumerate}
\item for each crossing $c$ of $D_{[t,+\infty)}$ the region $\kappa(c)$ is one of the (at most four) regions having $c$ as boundary corner, 
\item $\kappa$ is injective and does not allow the unbounded region to be occupied,
\item the unbounded region meets none of the intervals in the local state $x$,
\item for every bounded, unoccupied region, there exists exactly one interval in $x$ which bounds the region.
\end{enumerate}
In figures the occupied regions will have a black dot.
Define $DFK(K_{[t,+\infty)})$ to be the vector space generated over $\F$ by the upper Kauffman states. 

\subsection{Partial Kauffman states}
Now instead of considering an upper diagram, consider a \emph{partial knot diagram}. This consists of $D_{[t_1, t_2]}= \pi(K) \cap \R \times [t_1, t_2]$ the portion of the front projection of $K$ between two horizontal planes $z=t_1$ and $z=t_2$ for $t_1<t_2$. 
A \emph{partial Kauffman state} is a triple $\left( \k , x , y \right)$ where $x$ is a local state of the $z=t_1$ slice, $y$ is a local state of the $z=t_2$ slice, and $\k$ is a map associating to each of the crossings of $D_{[t_1, t_2]}$ a bounded connected component of $\R \times [t_1, t_2] \setminus D_{[t_1, t_2]}$. For a partial Kauffman state the following conditions are required to hold:
\begin{enumerate}
\item $\kappa$ is injective, and $c \in \partial \kappa(c)$ for each crossing $c$ of $D_{[t_1, t_2]}$,
\item if a region $\cS$ is occupied by $\kappa$, then $y$ contains all the intervals at $z=t_2$ intersecting $\cS$, and $x$ contains none of the intervals at $z=t_1$ lying on $\cS$,
\item if $\cS$ is unoccupied by $\kappa$, then either:
\begin{enumerate}
\item  $y$ contains all but one of the intervals at $z=t_2$ intersecting $\cS$, and $x$ contains none of the  intervals at $z=t_1$ intersecting $\cS$
\item or $y$ contains all of the intervals at $z=t_2$ intersecting $\cS$, and $x$ contains exactly one of the intervals at $z=t_1$ intersecting $\cS$.
\end{enumerate}
\end{enumerate}
We denote by $DAFK(K_{[t_1,t_2]})$ the vector space generated over $\Z_2$ by the partial Kauffman states.
In figures the occupied regions will have a black dot.

\subsection{Lower Kauffman states and gluing} There is an obvious notion of gluing between upper and partial Kauffman states: 
given a Kauffman state $(\kappa_\cU, y)$ of $D_{[t_2, +\infty)}$, and a Kauffman state $(\kappa_\cP,  x, y)$ of $D_{[t_1, t_2]}$ 
we can form a Kauffman state  $(\kappa_\cP,  x, y)*(\kappa_\cU, y)=(\kappa_\cP * \kappa_\cU, x)$ of $D_{[t_1, +\infty)}$ by simply setting $\kappa_\cU * \kappa_\cP(c)=\kappa_\cU(c)$ if $c$ is a crossing of $D_{[t_1, +\infty)}$, and $\kappa_\cU * \kappa_\cP(c)=\kappa_\cP(c)$ otherwise. 

\begin{figure}[t]
\begin{overlay}
\pict{split_knot.pdf}{0.5}
\toptext{$\mathcal{U}$}{0,2}
\toptext{$\mathcal{L}$}{0,-0.8}
\toptext{$z = t$}{3.3,0.9}
\toptext{$z = -10+\e$}{3.3,-2}
\end{overlay}
\caption{Knot divided into the portion $\cU$ and $\cL$.}
\label{fig:split_knot}
\end{figure}

\begin{exr}
Choose $\epsilon >0$ so that $D_{[-10+\epsilon, t]}=\pi(K) \cap \R \times [-10+\epsilon, t]$ does not contain the global minimum as in Figure \ref{fig:split_knot}. Suppose that we have an upper Kauffman state $(\k_\cU, y)$ of  $D_{[t,+\infty)}$, and a partial Kauffman state $(\k_\cL, x, y)$ of the middle region $D_{[-10+\epsilon,t]}$.  Show that the gluing $\kappa_\cL*\kappa_\cU$ defines a Kauffman state of the knot at large. Show that all the Kauffman states of $K$ arise this way.
\end{exr}

Fix  $\epsilon >0$ sufficiently small so that $D_{[-10+\epsilon, t]}$ does not contain the global minimum, and define $AFK(D_{(-\infty, t]})$ to be the $\F$-vector space generated by the partial Kauffman states of $D_{[ -10+\epsilon, t]}$, \text{i.e.} set $AFK(D_{(-\infty, t]})=DAFK(D_{[-10+\epsilon, t]})$. The generators of $AFK(D_{(-\infty, t]})$ are called \textit{lower Kauffman states}.

\section{Boundary algebras, curvature and matchings}
\subsection{The boundary algebra $\cC\left( n \right)$ }
The type $D$-structures and  $\cA_\infty$-modules associated to upper and lower diagrams 
will be defined over a certain associative algebra $\cC\left( n \right)$
which only depends on the number of endpoints of the diagrams. 
The algebra $\cC\left( n \right)$ is the quotient of a larger algebra $\cC_0\left( n \right)$ 
which we will define presently.

As a vector space over $\Z_2$, $\cC_0\left( n \right)$ is generated by the set of triples:
\begin{equation*}
\left( x , y , U_1^{a_1} \cdots U_{2n}^{a_{2n}}\right) 
\end{equation*}
where $x$, $y$ are two local states, and $U_1^{a_1} \cdots U_{2n}^{a_{2n}}$ is a monomial in $\FF\left[ U_1 , \cdots , U_{2n} \right]$. 

Define the weight vector $w\left( x,y,U_1^{a_1} \cdots U_{2n}^{a_{2n}}\right) \in \Z^{2n}$ 
of a generator by setting
\begin{equation*}
w\left( x , y , U_1^{a_1} \cdots U_{2n}^{a_{2n}}\right)=
\frac{1}{2} v\left( x,y \right) + \left( a_1 , \cdots , a_{2n} \right)
\end{equation*}
where the $i$th component $v_i(x,y)$ of $v\left( x,y \right) \in \ZZ^{2n}$ is given by:
\begin{equation} v_i(x,y)=
\abs{
\# \left\{ j\in x \st i \leq j \right\} - 
\# \left\{ k\in y \st i \leq k \right\}
} \ .
\end{equation}
This can be thought of as a vector which keeps track of how $x$ changes into $y$.
See \Cref{fig:weight_2} for an example.
We define the product $a \cdot b$ of two generators $a = \left( x , y , U_1^{a_1} \cdots U_{2n}^{a_{2n}}\right)$ and $b = \left( s , r , U_1^{b_1} \cdots U_{2n}^{b_{2n}}\right)$ of $\cC_0\left( n \right)$ to be zero if $y \neq s$, and
\begin{equation*}
a \cdot b = \left( x,r , U_1^{t_1} \cdots U_{2n}^{t_{2n}} \right)
\end{equation*}
if $y = s$, where $t_1 , \cdots , t_{2n}$ are chosen so that
$w\left( a \cdot b \right) = w\left( a \right) + w\left( b \right)$.

For the local state $x$ set $I_x=\left( x , x, 1 \right)$. A straightforward computation shows that 
\begin{equation*}
I_x \cdot I_y = 
\begin{cases}
I_x & x = y \\
0 & x\neq y
\end{cases}\, \, \, .
\end{equation*}
In particular, $I_x$ is an idempotent element. In fact, $\left\{ I_x \st x \text{ is a local state } \right\}$ generate the ring of idempotents of $\cC_0\left( n \right)$.  This ring will be denoted by $I\left( n \right)$. Notice that $\cC_0\left( n \right)$ is  a unital algebra over $\FF\left[ U_1 , \cdots , U_{2n} \right]$, where
\[1 = \sum_{x\in \text{ local states }} I_x \ , \ \ \ \text{ and } \ \ \ U_i = \sum_{x\in \text{ local states }} ( x , x , U_i ) \ \ \ \text{for } i=1, \dots , 2n  .\]
The algebra $\cC\left( n \right)$ is defined by imposing on $\cC_0(n)$ the following relations:
\begin{enumerate}[label=(R-\arabic*)]
\item $\left( x,y,U_1^{t_1} \cdots U_{2n}^{t_{2n}} \right) = 0$ if $x\cap \left\{ i-1 , i \right\} = \emp$ and $t_i \neq 0$. \label{item:r1}
\item $\left( x,y,U_1^{t_1} \cdots U_{2n}^{t_{2n}} \right) = 0$ if the local states $x,y$ are far apart from one another, i.e. there is some $i$ such that $\abs{x_i - y_i} > 1$. \label{item:r2}
\end{enumerate}
The relations in \ref{item:r1} can be written in terms of indenpotent elements as 
\[I_x \cdot U_i=0\]
for any local state $x$ such that $x\cap \left\{ i-1 , i \right\} = \emp$. Similarly, the relations in \ref{item:r2} can be interpreted as follows. Fix $i \in \{1, \dots, 2n-1\}$. Given a local state $x$ such that $ i-1\in x$ but $i\not \in x$ we form a new local state $x'=(x\setminus \{j-1\}) \cup \{j\}$. Define  
\begin{equation*}
R_i \ceqq \sum_{\{x\st i-1\in x, i\not \in x\}} \left( x , x' , 1 \right)
\ \ \ \ \text{ and } \ \ \ \
L_i \ceqq 
\sum_{\{x\st i-1\in x, i\not \in x\}}
\left( x' , x , 1 \right).
\end{equation*}
Then to say that the relations in \ref{item:r2} are satisfied is the same as requiring that $R_i \cdot R_{i+1} = 0$ and $L_{i+1} \cdot L_i = 0$ for $i = 1, \dots , 2n-1$.

\begin{figure}[t]
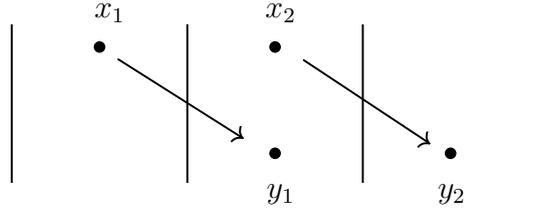

\centering
\begin{overlay}
\pict{weight_2.pdf}{0.5}
\toptext{$x_1$}{-2.25,1.2}
\toptext{$x_2$}{0,1.2}
\toptext{$y_1$}{0,-1.2}
\toptext{$y_2$}{2.25,-1.2}
\end{overlay}
\caption{
Take $n = 2$. In this case $v\left( x , y \right) \in \ZZ^{4}$.
In particular, 
$v\left( x,y \right) = \left(\abs{2-2},\abs{1-2},\abs{0-1} , \abs{0-0} \right) = \left( 0,1,1,0 \right)$.
This should be thought of as a vector which keeps track of how to `get'
from one local state to another by counting which boundaries are traversed.}
\label{fig:weight_2}
\end{figure}

\subsection{Action of idempotents on partial Kauffman states} 
Let $K_{[t, +\infty)}$ be an upper diagram with $2n=\abs{K \cap \{z=t\}}$ endpoints. 
Given an idempotent element $I_x \in I(n)$ and an upper Kauffman state $( \kappa , y)$,
set $I_x \cdot (\kappa , y)= 0$ if $x \not= y$, and $I_x \cdot (\kappa , x)= (\kappa , x)$ otherwise. This defines an action of the ring $I(n) \subset \cC(n)$ on the vector space $DFK(K_{[t, +\infty)})$ generated by upper Kauffman states.

Similarly, given a partial diagram $K_{[t_1, t_2]}$ we can consider the algebra $\cC(n_2)$ associated to the upper boundary of  $K_{[t_1, t_2]}$ (the slice at $z=t_2$, in particular $2n_2=\abs{K \cap \{z=t_2\}}$) and the algebra $\cC(n_1)$ associated to its lower boundary component ($2n_1=\abs{K \cap \{z=t_1\}}$). Given a partial Kauffman state $(\kappa, x , y)$ of $K_{[t_1, t_2]}$ and idempotent states $I_{z'} \in I(n_1)$ and $I_{z} \in I(n_2)$ we define 
\[
 (\kappa, x , y) \cdot I_z= 
\begin{cases}
(\kappa, x , y) & z = y \\
0 & z\neq y
\end{cases}
\ \ \  \text{ and }\ \ \
I_{z'} \cdot (\kappa, x , y)  = 
\begin{cases}
(\kappa, x , y) & z' = x \\
0 & z'\neq x
\end{cases} \ \ \ .
\] 
This turns the vector space generated by partial Kauffman states into a bimodule over $I(n_1)$ and $I(n_2)$. Notice that there are the following identifications of $\F$-vector spaces.

\begin{prop} $DFK(K_{[t_1, +\infty)})= DAFK(K_{[t_1, t_2]}) \otimes_{I(n_2)} DFK(K_{[t_2, +\infty)})$.
\end{prop}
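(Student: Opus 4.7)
The plan is to exhibit an explicit bijection between a basis of $DFK(K_{[t_1,+\infty)})$ and a basis of $DAFK(K_{[t_1,t_2]}) \otimes_{I(n_2)} DFK(K_{[t_2,+\infty)})$. Basis vectors on the right-hand side are elementary tensors $(\kappa_\cP, x, y) \otimes (\kappa_\cU, y')$; by the left/right idempotent action defined in Section 5.2, such a tensor is non-zero in the balanced tensor product over $I(n_2)$ if and only if $y = y'$. So the right-hand side has a basis consisting of pairs of Kauffman states whose local states at $z = t_2$ agree. I would then send such a pair to the glued upper Kauffman state $(\kappa_\cP * \kappa_\cU, x)$ defined in Section 5.3.

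First I would check that $\Phi\bigl((\kappa_\cP, x, y) \otimes (\kappa_\cU, y)\bigr) := (\kappa_\cP * \kappa_\cU, x)$ is indeed an upper Kauffman state of $D_{[t_1, +\infty)}$: the injectivity of $\kappa_\cP * \kappa_\cU$ follows from the fact that $\kappa_\cP$ and $\kappa_\cU$ assign crossings to regions on opposite sides of $z = t_2$; and each bounded unoccupied region of $D_{[t_1,+\infty)}$ is either a region of $D_{[t_1,t_2]}$, a region of $D_{[t_2,+\infty)}$, or a merger along an interval of $y$. The Kauffman conditions (items (3)–(4) in Section 5.1 and item (3) in Section 5.2) then give exactly one interval of $x$ for each such region. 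This amounts to the content of the exercise following Figure 7, adapted to the partial-to-upper gluing.

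Next I would construct an inverse $\Psi$. Given an upper Kauffman state $(\kappa, x)$ of $D_{[t_1,+\infty)}$, define $\kappa_\cP$ and $\kappa_\cU$ to be the restriction of $\kappa$ to the crossings of $D_{[t_1,t_2]}$ and $D_{[t_2,+\infty)}$ respectively, and define the intermediate local state $y \subseteq \{1,\dots, 2n_2 - 1\}$ to consist of those intervals on the slice $z = t_2$ which lie on the boundary of a region of $D_{[t_1,+\infty)}$ not occupied by $\kappa$ and which themselves bound an unoccupied region on the $[t_2,+\infty)$-side. The only step requiring a moment of thought is verifying that $(\kappa_\cP, x, y)$ and $(\kappa_\cU, y)$ are a valid partial and upper Kauffman state: for any region $\cS$ of $D_{[t_1,t_2]}$ or $D_{[t_2,+\infty)}$, one has to trace through the four cases determined by whether $\cS$ is occupied and whether it was obtained by cutting a region of $D_{[t_1,+\infty)}$ along $z=t_2$. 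A region-by-region accounting shows the conditions from Sections 5.1–5.2 on $(\kappa_\cP, x, y)$ and $(\kappa_\cU, y)$ are exactly those inherited from $(\kappa, x)$.

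Finally, $\Phi \circ \Psi = \id$ is immediate from the definitions, and $\Psi \circ \Phi = \id$ because the intermediate local state $y$ reconstructed from $\kappa_\cP * \kappa_\cU$ coincides with the original $y$ — each interval at $z = t_2$ is in $y$ precisely when neither side occupies the adjacent region. I expect the main obstacle to be the region-by-region case analysis in the verification that $\Psi$ lands in valid Kauffman states; this is essentially bookkeeping but is where the asymmetry between the Kauffman condition (4) of Section 5.1 (exactly one bounding interval per unoccupied region) and the two alternatives (a)–(b) of the partial condition (3) of Section 5.2 gets exploited, since cutting a single unoccupied region in two produces two unoccupied regions whose marked intervals are split between $x$ and $y$.
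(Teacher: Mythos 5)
Your proposal is correct and follows essentially the same route as the paper: the paper's proof likewise sends $(\kappa_\cP, x, y)\otimes(\kappa_\cU, y)$ to the glued state $(\kappa_\cP * \kappa_\cU, x)$ and uses $I_y\cdot I_z = 0$ for $y\neq z$ to dispose of mismatched tensors, deferring the fact that gluing is a bijection to the exercise of Section 4.3. You simply fill in more of the region-by-region verification (the construction of the inverse $\Psi$ and the case analysis on how regions are cut by the slice $z=t_2$) that the paper leaves implicit.
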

\begin{proof}
Every upper Kauffman state of $K_{[t_1, +\infty)}$ is obtained by gluing an upper Kauffman state 
$(\kappa, y)$ of $K_{[t_2, +\infty)}$ to a partial Kauffman state 
$(\kappa',  y, x)$ of $K_{[t_1, t_2]}$. 
We make a generator $(\kappa', x, y) \otimes (\kappa, y)$ correspond to the gluing 
$(\kappa'*\kappa, x)$. 
For a generator of the form $(\kappa', x, y) \otimes (\kappa, z)$ with 
$z \not= y$ one computes 
$(\kappa', x, y) \otimes (\kappa, z) =  (\kappa', x, y) \cdot I_y \otimes  I_z \cdot (\kappa, z)  = (I_y \cdot I_z)   \cdot (\kappa', x, y) \otimes (\kappa, z)=0$ (where we have used the identity: $I_y \cdot I_z=0$ for  $z \not= y$), so we have nothing to map.
\end{proof}

\begin{cor}$\widehat{CFK}(K)= AFK(K_{(-\infty, t]}) \otimes_{I(n)} DFK(K_{[t, +\infty)})$.
\end{cor}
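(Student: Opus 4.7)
The plan is to invoke the preceding proposition for the slicing $t_1 = -10+\epsilon$ and $t_2 = t$. By the definition given at the end of Section 4.3 one has $AFK(K_{(-\infty, t]}) = DAFK(K_{[-10+\epsilon, t]})$, so the proposition applied to this particular decomposition reads
\[
DFK(K_{[-10+\epsilon,+\infty)}) \;=\; AFK(K_{(-\infty,t]}) \otimes_{I(n)} DFK(K_{[t,+\infty)}).
\]
Consequently the entire content of the corollary reduces to producing an identification of $\F$-vector spaces between $\widehat{CFK}(K)$ and $DFK(K_{[-10+\epsilon,+\infty)})$.

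For this remaining step I would observe that $z = -10+\epsilon$ sits just above the unique global minimum of $\restr{h}{K}$, so the plane meets $K$ transversely in exactly two points. Thus the bottom boundary algebra has $n_0 = 1$, the set of local states at the bottom is the singleton $\{\{1\}\}$, and a generator of $DFK(K_{[-10+\epsilon,+\infty)})$ amounts to a single function $\kappa$ assigning to each crossing of $K$ a region of $\R\times[-10+\epsilon,+\infty)\setminus D_{[-10+\epsilon,+\infty)}$, subject to conditions (1)--(4) of Section \ref{sec:upperkauffman}. The idea is then to match this data with a classical Kauffman state in the sense of Section 2.3: placing the marked arc of the projection on the single arc of $K$ near the global minimum, the unbounded region of the upper diagram corresponds to the region containing the marked arc, and the forced local state $\{1\}$ precisely witnesses the unique unoccupied bounded region adjacent to the minimum arc. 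Under this dictionary the four defining conditions of an upper Kauffman state translate verbatim into those of a Kauffman state $\sigma \colon Cr(V)\to D(V)$ of $K$, and the map $(\kappa,\{1\}) \mapsto \kappa$ furnishes the required identification.

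The only real obstacle in the argument is this combinatorial matching at the bottom slice, which requires carefully aligning two slightly different conventions (upper Kauffman versus classical Kauffman). This is essentially the content of the exercise following the gluing construction in Section \ref{sec:partial_kauffman_states}, so once that exercise is granted the corollary follows immediately from the proposition.
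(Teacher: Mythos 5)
Your proposal is correct and follows exactly the route the paper intends: the corollary is stated without proof because it is the proposition applied with $t_1=-10+\epsilon$, $t_2=t$ (using $AFK(K_{(-\infty,t]})=DAFK(K_{[-10+\epsilon,t]})$ by definition), combined with the identification of upper Kauffman states of $D_{[-10+\epsilon,+\infty)}$ with classical Kauffman states of $K$, i.e. with the generators of $\widehat{CFK}(K)$, which is precisely the content of the exercise you cite. Your bottom-slice bookkeeping (two endpoints, unique local state $\{1\}$, marked arc at the global minimum) correctly fills in the detail the paper leaves as an exercise.
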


\subsection{Curvature and matchings} 
$DFK(K_{[t, +\infty)})$ and $AFK(K_{(-\infty,t]})$ will be respectively regarded as a curved type $D$-structure and $\cA_\infty$-module over $\cC(n)$, where $2n$ denotes the number of points in the slice $z=t$. The curvature has the following combinatorial interpretation.

For the slice of a knot at $z=t$, we have an associated matching of the $2n$ intersection
points to one another given by how they are connected by the strands lying above the plane $z=t$.
This is denoted $M$, and consists of $n$ disjoint pairs $\{ i,j \}$ for $i , j \in \left\{ 1 , \cdots , 2n \right\}$.
The curvature is given by:
\[\om_M = \sum_{ \{ i,j \} \in M} U_i U_j \ .\]

\begin{exr} Prove that $\omega_M$ defined from a matching $M$ of $\{1, \dots , 2n\}$ as above is a central element of $\cC(n)$. That is, prove that $\omega_M \cdot x= x  \cdot \omega_M$ for all $x \in \cC(n)$.
\end{exr} 

\section{Structure maps and the Gluing Theorem}
\subsection{The type $D$-structure of an upper diagram}\label{typeD}
We now sketch the definition of the structure map
\begin{equation*}
\dd : DFK(K_{[t, +\infty)}) \fromto \cC\left( n \right) \otimes_{I(n)} DFK(K_{[t, +\infty)}) \ ,
\end{equation*}
of the type $D$-structure associated to an upper diagram $K_{[t, +\infty)}$. Here the ring of indepotents $I(n)$ plays the role of the base ring, while $\cC(n)$ the one of the ground $\cA_\infty$-algebra.  The structure constants of $\delta$ will be defined by means of pseudo-holomorphic disk counts. First let us interpret upper Kauffman states in terms of intersection points of the appropriate Lagrangian Floer setting.

Let $K\subset \R^3$ be a knot. Put $K$ in standard position as in \Cref{sec:partial_kauffman_states} and form the doubly pointed Heegaard diagram $\cH$ associated to the projection of $K$ on the $xz$-plane as in Section \ref{heegaarddiagram}. Looking at the portion of $\cH$ lying above the plane $z=t$ for some $t \in (-10, 10)$, we get a genus $g$ surface $\Sigma$ with $2n$ boundary components $Z_1, \dots, Z_{2n}$ together with the following data:
\begin{enumerate}
\item a collection $\mb{\alpha}^\text{arc}=\{\alpha^\text{arc}_1, \dots , \alpha^\text{arc}_{2n-1}\}$ of disjoint, properly embedded, oriented arcs, with the arc $\alpha_i^\text{arc}$ connecting $Z_i$ to $Z_{i+1}$,
\item a collection of pairwise disjoint, homologically linearly independent, simple closed curves $\mb{\alpha}=\{\alpha_1, \dots, \alpha_g\}$,
\item a collection of pairwise disjoint,  simple closed curves $\mb{\beta}=\{\beta_1, \dots, \beta_{g+n-1}\}$ with the following property: the surface obtained by cutting $\Sigma$ along $\mb{\b}$ has $n$ connected components, and each of them contains exactly two of the boundary circles $Z_1, \dots, Z_{2n}$.
\end{enumerate}
We call the collection of such data an
\textit{upper Heegaard diagram}. Notice that the boundary components of an upper Heegaard diagram $\cH^{\text{up}}=(\Sigma, \mb{\alpha}, \mb{\alpha}^\text{arc}, \mb{\beta})$ come matched together: a component $Z_i$ is paired with a component $Z_j$ if they can be connected by an arc in the complement of the $\beta$-curves. In the case of knot diagrams this matching is exactly the matching $M$ of $\{1, \dots, 2n\}$ in which $\{i,j\} \in M$ iff $i$ and $j$ are the endpoints of a strand of $K_{[t, +\infty)}$.
See \Cref{fig:matching}.

We will be working with pseudo-holomorphic disks in $\Sym^{g+n-1}(\bar{\Sigma})$, where $\bar{\Sigma}$ denotes the closed Riemann surface obtained from $\Sigma$ by pinching its boundary circles. We denote by $z_1, \dots, z_{2n}$ the punctures of $\bar{\Sigma}$ corresponding to the boundary circles of $\Sigma$. Notice that an alpha arc $\alpha^\text{arc}_i$ extends to a curve $\overline{\alpha}^\text{arc}_i \subset \bar{\Sigma}$ with $\partial \overline{\alpha}^\text{arc}_i=z_{i}-z_{i+1}$.  As in the closed case, the $\alpha$- and the $\beta$-curves specify two Lagrangian submanifolds inside $\Sym^{g+n-1}(\bar{\Sigma})$. More specifically we set $L_\beta= \beta_1 \times \dots \times \beta_{g+n-1}$ and 
\[L_\alpha= \alpha_1\times \dots \times \alpha_g \times \Sym^{n-1}\left(\overline{\alpha}_{1}^\text{arc} \cup \dots \cup \overline{\alpha}_{2n-1}^\text{arc} \right) \ .\]
Notice that $L_\alpha$ is a singular space with singularities at points having more than one  coordinate on the same $\alpha$-arc. If we set $L_\alpha^*=L_\alpha \setminus L_\alpha^\text{sing}$ then we have that
\[L_\alpha^*= \bigcup_{\{i_1, \dots , i_{n-1}\}\subset \{1, \dots, 2n-1\}} \alpha_1\times \dots \times \alpha_g \times \alpha_{i_1}^\text{arc} \times \dots \times \alpha_{i_{n-1}}^\text{arc} \ . \]  

\begin{exr} Prove that the intersection points of $L_\alpha^* \cap L_\beta$ are in one to one correspondence with upper Kauffman states. Show that the local state of the upper Kauffman state associated to an intersection point $\x \in (\alpha_1\times \dots \times \alpha_g \times \alpha_{i_1}^\text{arc} \times \dots \times \alpha_{i_{n-1}}^\text{arc}) \cap L_\beta$ is given by $s(\x)= \{1, \dots, 2n-1\} \setminus \{i_1, \dots , i_{n-1}\} $ 
\end{exr} 

Let $\x$ and $\y \in L_\alpha^* \cap L_\beta$ be two intersection points. We denote by $\pi_2(\x, \y)$ the set of homotopy classes of Whitney disks connecting $\x$ to $\y$ with left side lying on $L_\alpha$ and right side on $L_\beta$.  We associate to a homotopy class $\phi \in \pi_2(\x, \y)$ an algebra element $c(\phi) \in \mathcal{C}(n)$ as follows. In proximity of the punctures $z_1, \dots, z_{2n}$ place base points $r_1, \dots, r_{2n}$ and $\ell_1, \dots, \ell_{2n}$ as suggested by Figure \ref{fig:boundary_markings}. As in the closed case a homotopy class $\phi \in \pi_2(\x, \y)$ has multiplicities $r_i(\phi)=\#| \phi(D^2) \cap V_{r_i}|$ and $\ell_i(\phi)=\#| \phi(D^2) \cap V_{\ell_i}|$. Suppose now $\x$ and $\y$ are two intersection points corresponding to Kauffman states $(x, \kappa_x)$ and $(y, \kappa_y)$ respectively. Define $c(\phi)= (x, y, U^{t_1}\dots U^{t_{2n}}) \in C(n)$, where the exponents $t_1,\dots, t_{2n}$ are chosen so that $c(\phi)$ has weight vector 
\[n(\phi)= \left(\frac{r_1(\phi)+\ell_1(\phi)}{2}, \dots, \frac{r_i(\phi)+\ell_i(\phi)}{2} , \dots , \frac{r_{2n}(\phi)+\ell_{2n}(\phi)}{2} \right) \ \  \in \Z^{2n} \ .\]

\begin{figure}
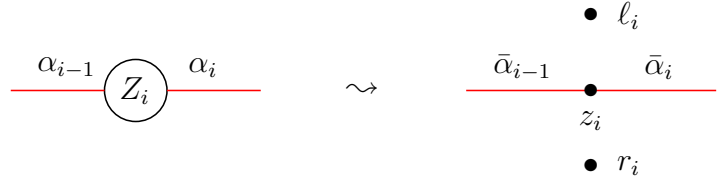

\begin{overlay}
\pict{boundary_markings.pdf}{0.62}
\toptext{$Z_i$}{-3,0}
\toptext{$z_i$}{3,-0.4}
\toptext{$\bullet$}{3,0}
\toptext{$\ell_i$}{3.5,1}
\toptext{$\bullet$}{3,1}
\toptext{$r_i$}{3.5,-1}
\toptext{$\bullet$}{3,-1}
\toptext{$\al_{i-1}$}{-3.9,0.3}
\toptext{$\al_{i}$}{-2.1,0.3}
\toptext{$\bar\al_{i}$}{3.9,0.3}
\toptext{$\bar\al_{i-1}$}{2.1,0.3}
\toptext{$\leadsto$}{0,0}
\end{overlay}
\caption{(Left) Some boundary component $Z_i$ of $\Sigma$.
(Right) Boundary component $Z_i$ filled in and replaced by the point $z_i$.
Then the points $\ell_i$, and $r_i$ are placed on opposite sides
of $\bar\al_i \un \bar \al_{i-1}$.
.}
\label{fig:boundary_markings}
\end{figure}

\begin{lem}\label{additivity} Given $\phi_1\in \pi_2(\x, \y)$ and $\phi_2\in \pi_2(\y, \bold{z})$ we have $c(\phi_1) \cdot c(\phi_2)= c(\phi_1 * \phi_2)$.
\end{lem}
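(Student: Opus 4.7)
The plan is to reduce the identity to two facts: (i) the intersection multiplicities $r_i$ and $\ell_i$ are additive under juxtaposition of homotopy classes, and (ii) multiplication in $\cC_0(n)$ (and hence $\cC(n)$) is, by construction, additive on weight vectors. Once both claims are in place, the lemma is essentially a matching-of-coefficients argument.

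First I would verify that $c(\phi_1)\cdot c(\phi_2)$ is nonzero. By hypothesis $\phi_1$ ends at $\y$ and $\phi_2$ begins at $\y$, so if $y$ denotes the local state of the Kauffman state associated to $\y$, we have $c(\phi_1)=(x,y,U_1^{t_1}\cdots U_{2n}^{t_{2n}})$ and $c(\phi_2)=(y,z,U_1^{s_1}\cdots U_{2n}^{s_{2n}})$ with matching middle index; hence the product is $(x,z,U_1^{u_1}\cdots U_{2n}^{u_{2n}})$ for exponents $u_i$ uniquely determined by the requirement
\[
w\bigl(c(\phi_1)\cdot c(\phi_2)\bigr)=w(c(\phi_1))+w(c(\phi_2)).
\]

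Next I would note the additivity of intersection multiplicities under concatenation of Whitney disks: $r_i(\phi_1*\phi_2)=r_i(\phi_1)+r_i(\phi_2)$ and $\ell_i(\phi_1*\phi_2)=\ell_i(\phi_1)+\ell_i(\phi_2)$, which is immediate from the definition $r_i(\phi)=\#|\phi(D^2)\cap V_{r_i}|$ since $\phi_1*\phi_2$ represents, up to reparametrization, the union of the two images. Consequently
\[
n(\phi_1*\phi_2)=n(\phi_1)+n(\phi_2).
\]

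Finally I would combine the two observations. By definition of $c(\phi)$, the weight of $c(\phi_i)$ equals $n(\phi_i)$, so the weight of the product $c(\phi_1)\cdot c(\phi_2)$ equals $n(\phi_1)+n(\phi_2)=n(\phi_1*\phi_2)$, which is the weight of $c(\phi_1*\phi_2)$. Both elements have the form $(x,z,U_1^{\bullet}\cdots U_{2n}^{\bullet})$, and since the local states $x,z$ together with the weight vector determine the exponents of the $U_i$'s uniquely via $w=\frac{1}{2}v(x,z)+(a_1,\dots,a_{2n})$, we conclude $c(\phi_1)\cdot c(\phi_2)=c(\phi_1*\phi_2)$. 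I do not expect a serious obstacle here: the only thing that requires any care is checking that the product actually lands in $\cC(n)$ rather than being killed by \ref{item:r1} or \ref{item:r2}, but this follows because $c(\phi_1*\phi_2)$ is itself a well-defined element of $\cC(n)$ with the same data.
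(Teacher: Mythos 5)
Your argument is essentially the paper's own proof: both reduce the identity to the additivity of the multiplicities $r_i,\ell_i$ (hence of $n(\phi)$) under concatenation, combined with the fact that multiplication in $\cC_0(n)$ is defined precisely so that weight vectors add, so the two elements share local states and weight vector and must coincide. The extra care you take about the relations \ref{item:r1} and \ref{item:r2} is a reasonable addition but does not change the route.
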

\begin{proof} 
This is true provided that $c(\phi_1) \cdot c(\phi_2)$ and $c(\phi_1 * \phi_2)$ have the same weight vector. On the other hand we have that $w(c(\phi_1 * \phi_2))=n(\phi_1 * \phi_2)=n(\phi_1)+n(\phi_2)=w(c(\phi_1))+w(c(\phi_2))=w(c(\phi_1)\cdot c(\phi_2))$.
\end{proof}

We define the structure map $\dd : DFK(K_{[t, +\infty)}) \fromto \cC\left( n \right) \otimes_{I(n)} DFK(K_{[t, +\infty)})$ as follows. Given an intersection point $\x \in L_\alpha \cap L_\beta$ set  
\begin{equation*}
\dd (\mathbf{x}) = \sum_{\y} \xi_{\x ,\y} \tp \y
\end{equation*}
where 
\[ \xi_{\x ,\y}=\sum_{\phi\in \pi_2( \mathbf{x} , \mathbf{y}),\mu( \phi) = 1}
\#\left(\frac{\cM( \phi) }{\RR}\right)\cdot c( \phi )  \ . \]

\begin{thm}$\delta$ is the structure map of an $\omega_M$-curved D-structure.
\label{lem:structure_map}
\end{thm}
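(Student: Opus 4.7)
The plan is to reduce the structure equation to a count of ends of one-dimensional moduli spaces, in direct parallel with the proof that $\partial^2 = 0$ in closed Heegaard Floer theory. Writing $\dd(\x) = \sum_\y \xi_{\x,\y} \tp \y$ and expanding the curved $D$-structure equation $(\mu_2 \tp \id_X) \circ (\id_\cA \tp \dd) \circ \dd = \om_M \tp \id_X$, what must be verified is that for all pairs of Kauffman states $\x,\mathbf{z}$,
\begin{equation*}
\sum_{\y \in L_\alpha^* \cap L_\beta} \xi_{\x,\y} \cdot \xi_{\y,\mathbf{z}} \;=\;
\begin{cases} \om_M \cdot I_{s(\x)} & \text{if } \mathbf{z} = \x,\\ 0 & \text{otherwise.}\end{cases}
\end{equation*}

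Given an index-$2$ homotopy class $\phi \in \pi_2(\x,\mathbf{z})$, the compactified moduli space $\overline{\cM(\phi)/\R}$ is a one-dimensional manifold whose boundary comes in two flavors. The first are two-level broken flows $(\phi_1,\phi_2)$ with $\mu(\phi_1)=\mu(\phi_2)=1$ and $\phi_1 * \phi_2 = \phi$; by the standard gluing theorem for pseudo-holomorphic disks in $\Sym^{g+n-1}(\bar\Sigma)$ and by the additivity of algebra elements recorded in \Cref{additivity}, summing over all $\phi$ and over all such broken flows reproduces the left-hand side above. Sphere bubbles and disk bubbles on $L_\beta$ are ruled out by the usual topological arguments (the relevant $\pi_2$ groups vanish in the punctured symmetric product).

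The second kind of end comes from the non-compactness of $L_\alpha^*$ near the punctures $z_1,\dots,z_{2n}$ of $\bar\Sigma$: a family of disks can escape to a configuration in which a portion of its boundary wraps around one of the punctures. A local analysis at each puncture shows that such a degeneration can occur only when the matched puncture $z_j$ with $\{i,j\} \in M$ is swept simultaneously, and that the resulting limit contributes exactly one unit of multiplicity at each of the four basepoints $r_i,\ell_i,r_j,\ell_j$ of \Cref{fig:boundary_markings}. By the definition of the weight vector and of $c(\phi)$, the associated algebra element is $U_iU_j$ times $I_{s(\x)}$, with left and right endpoints forcing $\mathbf{z}=\x$. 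Summing over all matched pairs $\{i,j\} \in M$ produces $\om_M \cdot I_{s(\x)}$, matching the right-hand side. Since the $\F$-valued count of ends of a compact one-manifold vanishes, the two contributions must agree, yielding the structure equation.

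The main obstacle is the analytic input underlying the second step: a Gromov-type compactness-and-gluing theorem for $J_s$-holomorphic disks in $\Sym^{g+n-1}(\bar\Sigma)$ relative to the singular, open Lagrangian $L_\alpha^*$, together with the precise identification of the possible limits at the punctures as ``strand-type'' degenerations indexed by the matching $M$. This is the technical core of the bordered construction of \cite{bordered1}, and it is precisely this analysis that forces the curvature to be $\om_M$ rather than zero.
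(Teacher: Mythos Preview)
Your overall strategy---reduce the structure equation to a count of ends of index-two moduli spaces and invoke \Cref{additivity}---matches the paper's, but you have inverted the source of the curvature, and this is a genuine gap. You assert that disk bubbles on $L_\beta$ are ruled out because ``the relevant $\pi_2$ groups vanish in the punctured symmetric product,'' and that $\om_M$ instead arises from ends where the $L_\alpha^*$-boundary escapes toward the punctures. But we are working in $\Sym^{g+n-1}(\bar\Sigma)$ with $\bar\Sigma$ \emph{closed}; the points $z_1,\dots,z_{2n}$ are basepoints, not deleted, so $\pi_2(\Sym^{g+n-1}(\bar\Sigma),L_\beta)$ does not vanish. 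In fact there are exactly $n$ homotopy classes of $\beta$-boundary degenerations, one for each matched pair $\{i,j\}\in M$, represented on the upper Heegaard diagram by the domains of \Cref{fig:matching}. Each such domain has $c(\phi)=U_iU_j$, and it is the sum of these that yields $\om_M$ when $\x=\mathbf{z}$.

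Conversely, the degenerations you identify on the $\alpha$-side---breaking at a point of $L_\alpha\setminus L_\alpha^*$ (the singular locus), or an $\alpha$-boundary degeneration near a puncture---are the ends that must be shown \emph{not} to contribute. This is exactly where the relations \ref{item:r1} and \ref{item:r2} in the definition of $\cC(n)$ enter: any index-two class $\phi$ admitting one of these degenerations has $c(\phi)=0$ in $\cC(n)$, so those ends may be discarded. Your proposal never invokes \ref{item:r1} or \ref{item:r2}, and without them the count of ends does not close up. In short: the curvature comes from the $\beta$-side (the matching), and the algebra relations kill the $\alpha$-side anomalies, not the other way around.
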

\begin{proof}[Sketch of the proof] We have to prove that the following relations between the structure 
constants hold:
\[ \sum_{\y} \xi_{\x,\y} \cdot \xi_{\y , \bold{z}}= \begin{cases} \ \omega_M \ \ \text{ if } \x = \bold{z} \\ \ 0 \ \ \text{ otherwise}  \end{cases}  \ .\]
Because of Lemma \ref{additivity} we have 
\[\sum_{\y} \xi_{\x,\y} \cdot \xi_{\y , \bold{z}}= \sum_{\mu(\phi)=2}\lambda(\phi) \cdot c(\phi)\]
where $\lambda(\phi) \in \Z_2$ is given by
\[\lambda(\phi)=  \sum_{\phi_1* \phi_2=\phi}  \#\left(\frac{\cM(\phi_1) }{\RR}\right) \cdot \#\left(\frac{\cM(\phi_2) }{\RR}\right) \ . \]

This leads us to consider the ends of the moduli space of holomorphic strips in homotopy classes $\phi \in \pi_2(\x, \bold{z})$ with $\mu(\phi)=2$. For the limit $u$ of a sequence of pseudo-holomorphic disks approaching the boundary of the moduli space $\mathcal{M}(\phi)/\R$ there are a few possibilities:
\begin{enumerate}
\item $u=u_1 * u_2$ is a broken flow-line with $u_1$ connecting $\x$ to an intersection point $\y \in L_\alpha^* \cap L_\beta$ and $u_2$ connecting $\y$ to $\bold{z}$ (with both $u_1$ and $u_2$ having Maslov index one)
\item $u=u_1 * u_2$ is a broken flow-line with $u_1$ connecting $\x$ to an intersection point $\y \in (L_\alpha \cap  L_\beta) \setminus (L_\alpha^* \cap  L_\beta) $ and $u_2$ connecting $\y$ to $\bold{z}$
\item $\x= \bold{z}$ and the boundary of $u$ is entirely mapped on $\overline{L_\alpha^*}$ 
\item $\x= \bold{z}$ and the boundary of $u$ is entirely mapped on $L_\beta$. 
\end{enumerate}

As a consequence of the relations (R-1) and (R-2) we imposed over $\cC(n)$ one can see that if degenerations such as $\left( 1 \right)$ and $\left( 3 \right)$ occur, then $c(\phi)=0$. Thus, one only has to consider moduli spaces of strips with $\mu(\phi)=2$ in which the other types of degenerations occur. For these moduli spaces when $\x\not=\bold{z}$ we have
\[0=\# \left| \partial \left(\frac{\cM(\phi) }{\RR}\right) \right| =   \sum_{\phi_1* \phi_2=\phi}  \#\left(\frac{\cM(\phi_1) }{\RR}\right) \cdot \#\left(\frac{\cM(\phi_2) }{\RR}\right)= \lambda(\phi)\ \ \ \text{mod }2 \ , \]
proving that $\sum_{\y} \xi_{\x,\y} \cdot \xi_{\y , \bold{z}}=0$ when $\x\not=\bold{z}$.   

On the other hand, when $\x=\bold{z}$ there are exactly $n$ homotopy classes of disks $\psi$ with boundary completely lying on $L_\beta$. These correspond to the matchings $\{i,j\}\in M$, and are represented on the upper Heegaard diagram by the domains of \Cref{fig:matching}. For these domains one computes $c(\phi)=U_i \cdot U_j$, proving that   
\[\sum_{\y} \xi_{\x,\y} \cdot \xi_{\y , \x}= \sum_{\{i,j\} \in M}U_i \cdot U_j = \omega_M \ ,\]
and we are done. 
\end{proof}

\begin{figure}
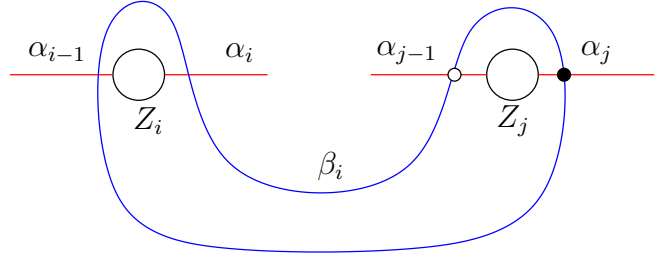

\begin{overlay}
\pict{matching.pdf}{0.57}
\toptext{$\b_i$}{0,-0.5}
\toptext{$\al_{i-1}$}{-3.6,1}
\toptext{$\al_{i}$}{-1.2,1}
\toptext{$Z_i$}{-2.4,0.1}
\toptext{$\al_{j-1}$}{1,1}
\toptext{$\al_{j}$}{3.5,1}
\toptext{$Z_j$}{2.4,0.1}
\end{overlay}
\caption{The matching $\left\{ i,j \right\}\in M$ 
as represented on the upper Heegaard diagram.}
\label{fig:matching}
\end{figure}

\begin{rmk} For sufficiently small $\epsilon >0$,  $CFK(K_{[-10+\epsilon, +\infty)})$ is a type $D$-structure over $\cC(2)=\F[U,V]/UV$. In this case $\omega_M=UV=0$, \text{i.e.} $CFK(K_{[-10+\epsilon, +\infty)})$ is flat. Obviously $CFK(K_{[-10+\epsilon, +\infty)})=CFK(K)$ as chain complexes.
\end{rmk} 

\subsection{The Gluing Theorem}
Similarly but in a somehow more complicated way, one may define the structure maps of an $\cA_\infty$-module 
\[m_j:AFK\left(K_{(-\infty,t]}\right) \otimes \cC(n)^{\otimes j-1} \to AFK\left(K_{(-\infty,t]} \right) \]
and of a $DA$-bimodule
\[\delta_{1+j}: DAFK\left(K_{[t,s]}\right)\otimes C(n)^{\otimes j}\to C(m)\otimes DAFK\left(K_{[t,s]}\right)  \]
where $n$ denotes the number of endpoints at the $z=t$ level and $m$ the number of endpoints at $z=t'$. We will not explicitly define these maps here. The main point of this construction is that one can compute the differential of
\[CFK(K)= AFK(K_{(-\infty, t]}) \otimes_{I(n)} DFK(K_{[t, +\infty)}) \ ,\] 
and more generally the structure map of 
\[DFK(K_{[t_1, +\infty)})= DAFK(K_{[t_1, t_2]}) \otimes_{I(n_1)} DFK(K_{[t_1, +\infty)})\]   
by means of Equation \eqref{eqn:d_structure_map_on_tensor_product}. More precisely we have the following theorem, whose proof relies on some gluing results about $J$-holomorphic curves.

\begin{thm}[Gluing Theorem, Ozsv\'ath-Szab\' o]For $-10<t_1<t_2<+10$ we have an identification of (curved) type $D$-structures
\[DFK(K_{[t_1, +\infty)})= DAFK(K_{[t_1, t_2]}) \boxtimes DFK(K_{[t_2, +\infty)}) \ .\]
In particular for $t \in (-10, +10)$ one has
\[CFK(K)= AFK(K_{(-\infty, t]}) \boxtimes DFK(K_{[t, +\infty)}) \ .\] 

\vspace{-1.2\lineheight}
\QEDB 
\end{thm}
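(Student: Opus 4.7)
The plan is to prove the first identity by a neck-stretching degeneration argument of symplectic field theory type; the second identity then follows by specialization. First I would note that the underlying $\F$-vector spaces on both sides already agree: the proposition proved earlier via idempotent actions identifies
\[DFK(K_{[t_1, +\infty)}) = DAFK(K_{[t_1, t_2]}) \otimes_{I(n_2)} DFK(K_{[t_2, +\infty)}) \ ,\]
which is precisely the underlying module of the box tensor product. Thus the real content of the theorem is matching the two $\omega_M$-curved type $D$-structure maps under this identification.

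Next I would choose a Heegaard diagram $\mathcal{H}$ for $K_{[t_1, +\infty)}$ that is visibly a concatenation of a partial Heegaard diagram $\mathcal{H}_{\text{mid}}$ for $K_{[t_1, t_2]}$ and an upper Heegaard diagram $\mathcal{H}_{\text{up}}$ for $K_{[t_2, +\infty)}$, glued along $2n_2$ separating circles $\gamma_1, \ldots, \gamma_{2n_2}$ corresponding to the strands crossing the slice $z = t_2$. Introducing a one-parameter family of almost-complex structures on the relevant symmetric product that stretches the neck along $\gamma_1 \cup \cdots \cup \gamma_{2n_2}$, I would apply the SFT compactness theorem to show that any sequence of $J_s$-holomorphic strips representing a class $\phi \in \pi_2(\x, \y)$ with $\mu(\phi)=1$ degenerates, in the limit, into a single strip $u_{\text{mid}}$ in $\mathcal{H}_{\text{mid}}$ together with an ordered (possibly empty) collection of strips $u^{(1)}_{\text{up}}, \ldots, u^{(j)}_{\text{up}}$ in $\mathcal{H}_{\text{up}}$, matched asymptotically at the neck punctures. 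The asymptotic matching data, read off from the multiplicities at the base points $r_i$, $\ell_i$ of \Cref{fig:boundary_markings}, is precisely recorded by an element of $T^*(\cC(n_2))$.

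Conversely, by the standard gluing theorem for punctured pseudo-holomorphic disks in symmetric products, every such broken configuration can be glued back to a unique (mod $2$) honest strip in $\cM(\phi)$ once the neck is sufficiently long. Combining compactness and gluing, and invoking \Cref{additivity} to see that $c(\phi)$ is multiplicative across the concatenation, the count on the left collapses precisely to
\[\sum_{j \geq 0} \bigl(\delta^{DA}_{1+j} \otimes \id\bigr) \circ \bigl(\id \otimes (\delta^{D})^{j}\bigr) \ ,\]
which is exactly the box tensor product structure map by \eqref{eqn:d_structure_map_on_tensor_product}. For the second equality one specializes $t_1 = -10 + \epsilon$ and uses $AFK(K_{(-\infty, t]}) = DAFK(K_{[-10+\epsilon, t]})$ together with the identification $CFK(K) = CFK(K_{[-10+\epsilon, +\infty)})$ as flat type $D$-structures over $\cC(2) = \F[U,V]/UV$.

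The hard part is the analysis: one must rule out the unwanted boundary strata of $\overline{\cM(\phi)}$ coming from disks bubbling off at the neck, from boundary degenerations onto $\overline{L_\alpha^*}$, or from components that sink entirely into the neck region. Just as in the proof of \Cref{lem:structure_map}, the relations \ref{item:r1} and \ref{item:r2} are exactly the algebraic mechanism that forces such bad configurations to contribute zero to $c(\phi)$ after passing to $\cC(n)$; but verifying the underlying compactness, transversality, and gluing claims in the non-exact symmetric product setting is the genuine technical heart of Ozsv\'ath--Szab\'o's argument, and is what I would cite as a black box rather than reprove.
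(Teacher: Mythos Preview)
The paper does not actually prove this theorem. The statement ends with a bare $\square$, and the very next sentence in the text reads: ``The details of these analytic constructions, as well as a proof of the Gluing Theorem will appear in a forthcoming work of Ozsv\'ath and Szab\'o.'' So there is no argument in the paper against which to compare your proposal.

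That said, your outline is the expected one for a bordered pairing theorem, in the spirit of Lipshitz--Ozsv\'ath--Thurston for $\widehat{CF}$. The identification of the underlying $I(n_2)$-modules via the idempotent proposition is exactly the right first step, and the heuristic that long-neck limits of index-one strips in the concatenated diagram break into a single strip in the middle piece together with a finite chain of strips in the upper piece, with the asymptotic data at the neck recorded by elements of $T^*(\cC(n_2))$, is the correct picture for producing the formula \eqref{eqn:d_structure_map_on_tensor_product}. You are also appropriately candid that the compactness, transversality, and gluing statements in the non-exact symmetric product setting are the substantive content and must be taken as input. Since the paper provides nothing beyond the statement, there is no further comparison to make.
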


The details of these analytic constructions, as well as a proof of the Gluing Theorem will appear in a forthcoming work of Ozsv\' ath and Szab\' o.

\section{The bimodules of some elementary configurations}\label{bimodules}
Any knot $K \subset \R^3$ can be presented by a diagram obtained by concatenating elementary configurations as in \Cref{fig:maxima,fig:parallel,fig:crossings,fig:local_min}.  This can be done by suitably modifying a knot projection in standard position as in \Cref{fig:alter} below. 
We now explicitly describe the structure maps of the curved $DA$-bimodules associated to these partial diagrams. Based on the Gluing Theorem one can then use the maps coming from these elementary computations to express the differential of knot Floer homology. This leads to a combinatorial formulation of knot Floer homology (see Section \cref{sec:final} below) alternative to grid homology \cite{grid}.

\subsection{Maxima} While scanning a knot projection in standard position all maxima appear at the same level, say $z=+10$. The portion of the diagram in between the slice $z=10$ and $z=10-\epsilon$ will then look like Figure \ref{fig:maxima}. In this case there is only one upper Kauffman state, corresponding to the local state $x$ taking the intervals: $(1,2), (3,4), \dots, (2n-1, 2n)$. See Figure \ref{fig:maxima}. Idempotents act on $x$ in the obvious way: $I_y \cdot x=x$ if $x=y$, zero otherwise. Furthermore, one computes $\delta(x)=0$.    
As such, the module is isomorphic to $\ZZ_2$.

\subsection{No critical points, no gain} To make a sanity check, let us consider the case of the trivial diagram in which $2n$ strands run parallel to each other as in Figure \ref{fig:parallel}. In this case $DAFK$ is identified with the ring of the idempotents $I(n)\subset \cC(n)$ with the left-multiplication action. The structure maps $\delta_{1+j}:I(n) \otimes \cC(n)^{\otimes j} \to \cC(n)$  are all zero except for $\delta_2: I(n) \otimes \cC(n) \to \cC(n)$. For the latter we have
$\delta_2(I_x \otimes (x,y, U_1^{t_1} \dots U_{2n}^{t_{2n}}))= (x,y, U_1^{t_1} \dots U_{2n}^{t_{2n}}) \otimes I_y$,
and zero otherwise. 

\begin{exr}
Let $(X, \delta_X)$ be the type $D$-structure associated to an upper knot diagram with $2n$ endpoints. Prove that $(I(n), \delta_{1+*}) \boxtimes (X,\delta_X) = (X, \delta_X) $. (Hint: 
\[\delta_{I(n)\boxtimes  X}= \sum_{j=0}^{+\infty} (\delta_{1+j} \otimes id_X) \circ (id_{I(n)} \otimes \delta_X^j)= (\delta_{2} \otimes id_X) \circ (id_{I(n)} \otimes \delta_X),\]
so one has just to prove that $(\delta_{2} \otimes id_X) \circ (id_{I(n)} \otimes \delta_X)(1 \otimes \x)= 1 \otimes \delta_X(\x)$ for any generator $\x=(\kappa, x)$ of $X$. Recall that in $I(n)$ we have $1= \sum_{\text{local states}} I_x$.)
\end{exr}

\subsection{Local Minima} 
Suppose now that while scanning a knot $K$ we encounter a local minimum configuration between two ``consecutive'' levels $z=t_1$ and $z=t_2$. See Figure \ref{fig:local_min}. The generators of $\mathcal{M}_{n+1}= DAFK(K_{[t_1, t_2]})$ of such a configuration look as in Figure \ref{fig:local_min}. We now describe the $\delta_{1+j}$ maps.

Denote by $M$ the matching of the upper portion of the diagram $K_{[t_2, + \infty)}$. Since $K$ is a knot the first two endpoints of $K_{[t_2, + \infty)}$ are not matched together. Let $s$ and $t \in \{2, \dots, 2n+2\}$ be the endpoints matched with $1$ and $2$ according to $M$. Notice that while passing to the level $z=t_1$ we get a new matching $M'$ in which $s$ and $t$ are matched together, and $\{i,j\} \in M'$ iff $\{i,j\}\in M\setminus \{\{1,s\}, \{2,t\}\}$.

Let $\x=(\emptyset,\x^{in}, \x^{out})$ and $\y=(\emptyset,\y^{in}, \y^{out})$ be generators of $DAFK$, and $a_1=(a_1^{in}, a_1^{out}, m_1(U_{1}, \dots,U_{2n+2})),\dots ,a_j=(a_j^{in}, a_j^{out}, m_j(U_{1}, \dots,U_{2n+2}))$ denote algebra elements in $\cC(n+1)$. Write
\[\delta_{j+1} (\x \otimes a_1 \otimes \dots \otimes a_j)= \sum_{\y} \xi_{\x, \y}(a_1, \dots, a_j) \cdot \y  \]
with $\xi_{\x, \y}(a_1, \dots, a_j) \in \cC(n)$. The coefficients $\xi_{\x, \y}(a_1, \dots, a_j)$ are usually zero unless  $j=2m+1$ is odd,  
\[\x^{in}=a_1^{in}, \ \ a_1^{out}=a_2^{in},\  \  \dots , \ \ a_{2m}^{out}=a_{2m+1}^{in}, \ \ a_{2m+1}^{out}=\y^{in} \ , \]
and the weight vectors of the $a_i$'s are of the form 
\begin{align*}
&w(a_{2\ell-1})= \left(0, \frac{1}{2}+ r_i, \dots  \right) \ \ \ \  \ell=1, \dots, m \\ 
&w(a_{2\ell})= \left(1+p_\ell, 0 , \dots  \right) \ \ \ \  \ \ \ \ \  \ell=1, \dots, m  \ \ .
\end{align*}
In this case we take $\xi_{\x, \y}(a_1, \dots , a_{2m+1})=(\x^{out}, \y^{out}, U_1^{t_1}\dots  U_{2n}^{t_2n})$, where the exponents $t_1, \dots , t_{2m} \in \Z_{\geq 0}$ are chosen so that $\xi_{\x, \y}(a_1, \dots , a_{2m+1})$ has weight vector $w=(w_1, \dots, w_{2n})$ with components: 
\begin{align*}
&w_i= \sum_{k} w_{i+2}(a_{k})  \ \ \ \  \ \ \ \ \ \ \ \ \ \ \ \ \ \ \ \  \text{ if } i\not=t-2, \  s-2\\
&w_{t-2}= \sum_{k} w_{i+2}(a_{k}) + \sum_\ell p_\ell\\
&w_{s-2}= \sum_{k} w_{i+2}(a_{k}) + \sum_\ell r_\ell \ .
\end{align*} 
In particular the map $\delta_{1+j}:\mathcal{M}_{n+1} \to \cC(n+1)^{\otimes j} \to \cC(n)$ vanishes for $j$ even. 

\begin{figure}
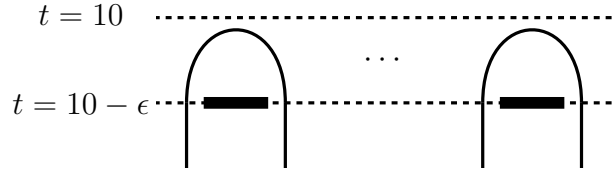

\begin{overlay}
\pict{maxima.pdf}{0.4}
\toptext{$t = 10$}{-4,1}
\toptext{$t = 10 - \e$}{-4,-0.2}
\toptext{$\cdots$}{0,0.4}
\end{overlay}
\caption{All of the maxima are on the same level between $z = 10$ and $z = 10-\e$.
Any knot in standard position can be suitably modified to look like this. 
Note that there is only one upper Kauffman state, which is the one pictured here
where we use a dot to denote an occupied region.}
\label{fig:maxima}
\end{figure}

\begin{figure}
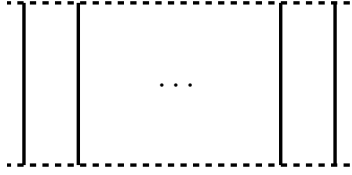

\begin{overlay}
\pict{parallel.pdf}{0.3}
\toptext{$\cdots$}{0,0}
\end{overlay}
\caption{The trivial diagram with $2n$ strands running parallel.}
\label{fig:parallel}
\end{figure}

\begin{figure}
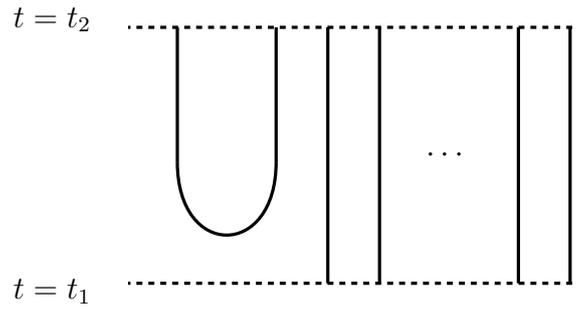

\begin{overlay}
\pict{local_min.pdf}{0.4}
\toptext{$\cdots$}{1.2,0}
\toptext{$t = t_2$}{-4,1.8}
\toptext{$t = t_1$}{-4,-1.8}
\end{overlay}
\caption{A local minimum between two levels $z = t_1$
and $z= t_2$. Any local minimum can be made to look like this one
as in \Cref{fig:alter}.}
\label{fig:local_min}
\end{figure}

\begin{figure}
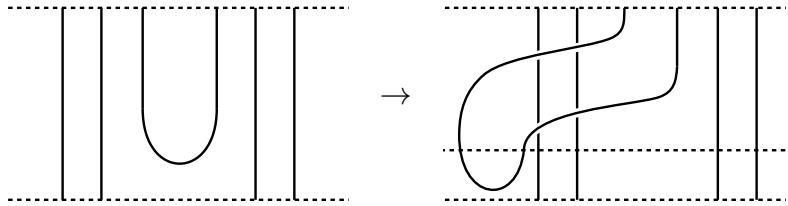

\begin{overlay}
\pict{alter1.pdf}{0.3}
\end{overlay}
$\fromto$
\begin{overlay}
\pict{alter2.pdf}{0.3}
\end{overlay}
\caption{For any local minimum in the slice $K_{\left[ t_1 , t_2 \right]}$,
we can manipulate the knot such that it instead has a local minimum of the form
in \Cref{fig:local_min}. This of course comes at the cost of additional crossings.}
\label{fig:alter}
\end{figure}

\begin{figure}
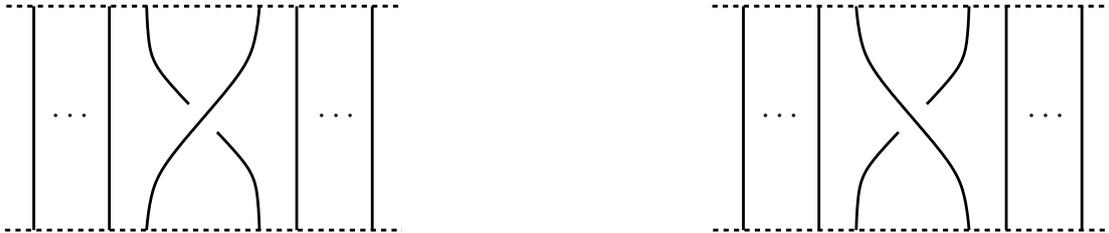

\begin{overlay}
\pict{positive_crossing.pdf}{0.35}
\toptext{$\cdots$}{-1.7,0}
\toptext{$\cdots$}{1.8,0}
\end{overlay}
\hfill
\begin{overlay}
\pict{negative_crossing.pdf}{0.35}
\toptext{$\cdots$}{-1.7,0}
\toptext{$\cdots$}{1.8,0}
\end{overlay}
\caption{(Left) The portion of the knot $K_{\left[ t_1 , t_2 \right]}$
contains a positive crossing. (Right) The portion of the knot $K_{\left[ t_1 , t_2 \right]}$
contains a negative crossing.}
\label{fig:crossings}
\end{figure}

\subsection{Crossings}\label{sec:crossings}
We now define the $DA$-bimodules associated to the crossings.
We will deal with the case of positive crossings first, and 
the case of a negative crossing will just be a slight variation of 
the same formulation.
\subsubsection{Positive crossings} 
Positive crossings are distinguished from negative crossings as in \Cref{fig:crossings}.
We construct a $DA$-bimodule $DAFK = P$ over $\cC\left( n \right)$.
This has four types of generators, which we write as
$N$, $S$, $W$, and $E$ corresponding to the possibilities near the crossing
as depicted in \Cref{fig:four_crossings}. 
These are distinguished from one another by their partial Kauffman states.
We now specify $\dd_1$, $\dd_2$, and $\dd_3$, the higher $\dd_i$'s vanish. 

\begin{figure}[t]
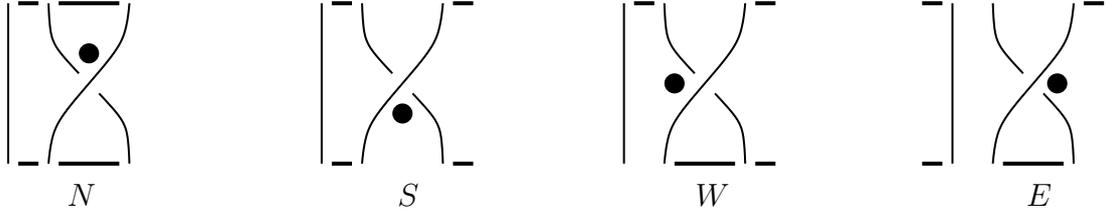

\centering
\begin{overlay}
\pict{four_crossings.pdf}{1}
\toptext{$N$}{-6.3,-1.5}
\toptext{$S$}{-2,-1.5}
\toptext{$W$}{ 2,-1.5}
\toptext{$E$}{ 6.3,-1.5}
\end{overlay}
\caption{The partial Kauffman states associated to the four types 
of positive crossings which generate the $DA$-bimodule.}
\label{fig:four_crossings}
\end{figure}

We first explicitly write:
\[\dd_1\left( N \right) = 0, \ \ \dd_1\left( E \right) = R_2 \tp  S, \ \   \dd_1\left( S \right) = 0 \ \text{ and } \dd_1\left( W \right) = L_1 \tp S \ . \]
The structure map $\dd_2$ is specified according to the following:
\begin{align*}
&\dd_2\left( S \right) = 1\tp 1 \tp S 
\\
&\dd_2\left( E \right) =
1\tp 1 \tp E
+ 1 \tp L_2 \tp N + R_2 R_1 \tp R_2 U_2 \tp N 
+ R_2 R_2 \tp U_2 \tp W
\\
&\dd_2\left( W \right) =
1 \tp 1 \tp E
+ 1 \tp R_1 \tp N + L_1L_2 \tp L_2U_1 \tp N
+ L_1L_2 \tp U_1 \tp E \\
&\dd_2\left( N \right) =
U_1 \tp R_2 \tp E + L_1L_2 \tp L_1 \tp E
+ U_2 \tp L_1 \tp W + R_2 R_1 \tp R_2 \tp W
\\
& \ \ \ \ \ \  \ \ \ + 1\tp 1 \tp N + L_1 L_2 \tp L_1 L_2 \tp N + R_2 R_1 \tp R_2 R_1 \tp N
\end{align*}
These can be extend using some local extension rules:
\begin{itemize}
\item If $b\tp Y$ appears with non-zero coefficient in
$\dd_2\left( N , a \right)$, then $\left( bU_2 \right) \tp Y$
appears with non-zero coefficient in $\dd_2\left( N , aU_1 \right)$
and $\left( bU_1 \right)\tp Y$ appears with non-zero coefficient in
$\dd_2\left( N , a U_2 \right)$.
\item If $b\tp Y$ appears with non-zero coefficient in $\dd_2\left( W , a \right)$,
then $\left( U_2 b \right)\tp Y$ appears with non-zero coefficient in
$\dd_2\left( W , U_1 a \right)$.
\item If $b\tp Y$ appears with non-zero coefficient in
$\dd_2\left( E , a \right)$, then $\left( U_1 b \right)\tp Y$ appears
with non-zero coefficient in $\dd_2\left( E , U_2 a \right)$.
\end{itemize}
for example, $\dd_2\left( W , U_1 \right) = U_2 \tp W + L_1 L_2 \tp E$.

Finally we specify $\dd_3$. We will call an algebra element \emph{elementary} if it is of the form
$p\cdot e$ for $p$ a monomial in $U_1$ and $U_2$, and
\begin{equation*}
e\in \left\{ 1 , L_1 , R_1 , L_2 , R_2 , L_1L_2 , R_2 R_1 \right\}
\end{equation*}
So let $a_1$ and $a_2$ be elementary algebra elements
such that $a_1 \tp a_2 \neq 0$. Also suppose that $U_1 U_2$ does not divide either
$a_1$ nor $a_2$. Then $\dd_3\left( S , a_1 , a_2 \right)$ is the sum of the following terms:
\begin{itemize}
\item $R_1 U_1^t \tp E$ if $\left( a_1 , a_2 \right) = \left( R_1 , R_2 U_2^t \right)$ and
$t\geq 0$.
\item $L_2 U_1^tU_2^n \tp E$ if $\left( a_1 , a_2 \right)\in$
\begin{align*}
\left\{ \left( U_1^{n+1} , U_2^t \right) , \left( R_1 U_1^n , L_1 U_2^t \right) , 
\left( L_2U_1^{n+1} , R_2  U_2^{t-1} \right)\right\} 
&& \text{when } 0\leq n < t \\
\left\{ \left( U_2^t , U_1^{n+1} \right) , 
\left( R_1 U_2^t , L_1 U_1^n \right) , \left( L_2 U_2^{t-1} , R_2 U_1^{n+1} \right)\right\}
&& \text{when } 1\leq t \leq n
\end{align*}
\item $L_2 U_2^n \tp W$ if $\left( a_1 , a_2 \right)= \left( L_2 , L_1 U_1^n \right)$ and $n\geq 0$.
\item $R_1 U_1^t U_2^n \tp W$ if $\left( a_1 , a_2 \right) \in$
\begin{align*}
\left\{ \left( U_1^{t+1} , U_1^n \right) , \left( L_2 U_2^t , R_2 U_1^n \right) , 
\left( R_1 U_2^{t+1} , L_1 U_1^{n-1} \right)\right\} && \text{when } 0\leq t < n \\
\left\{ \left( U_1^n , U_2^{t+1} \right) , \left( L_2 U_1^n , R_2 U_2^t \right) , 
\left( R_1 U_1^{n-1} , L_1 U_2^{t+1} \right)\right\} && \text{when } 1\leq n \leq t
\end{align*}
\item $L_2 U_1^t U_2^n \tp N$ if $\left( a_1 , a_2 \right) \in$
\begin{align*}
\left\{ \left( U_1^{n+1} , L_2 U_2^t \right) , \left( R_1 U_1^n , L_1L_2 U_2^t \right) , 
\left( R_1 U_2^{t+1} , U_1^n \right)\right\} && \text{when } 0\leq n < t \\
\left\{ \left( L_2 U_2^t , U_1^{n+1} \right) , \left( U_2^t , L_2 U_1^{n+1} \right)
\left( R_1U_2^t , L_1 L_2 U_1^n \right)\right\} && \text{when } 1\leq t \leq n \\
\left\{ \left( L_2 , U_1^{n+1} \right) \right\} && \text{when } 0 = t\leq n
\end{align*}
\item $R_1 U_1^t U_2^n \tp N$ if $\left( a_1 , a_2 \right)$ is in the following list:
\begin{align*}
\left\{ \left( U_2^{t+1} , R_1 U_1^n \right) , \left( L_2 U_2^t , R_2 R_1 U_1^n \right) , 
\left( R_1 U_2^{t+1} , U_1^n \right)\right\} && \text{when } 0\leq t < n \\
\left\{ \left( R_1 U_1^n , U_2^{t+1} \right) , \left( U_1^n , R_1 U_2^{t+1} \right),
\left( L_2U_1^n , R_2 R_1 U_2^t \right)\right\} && \text{when } 1\leq n\leq t \\
\left\{ \left( R_1 , U_2^{t+1} \right) \right\} && \text{when } 0 = n\leq t
\end{align*}
\end{itemize}

\subsubsection{Negative crossings}
We now construct a $DA$-bimodule $N$ over $\cC\left( n \right)$
to be associated to a negative crossing. 
These are distinguished from positive crossings as in \Cref{fig:crossings}.
This bimodule has the same four types of generators $N$,$S$,$E$, and $W$
and is just a tweaked version of $P$.
Define a map $o: \cC\left( n \right) \fromto \cC\left( n \right)$ to satisfy
$o\left( a\cdot b \right) = o\left( b \right) \cdot o\left( a \right)$ and 
in particular take:
\begin{align}
o\left( I_x \right) = I_x &&
o\left( U_t \right) = U_t \\
o\left( L_t \right) = R_t &&
o\left( R_t \right) = L_t  \ .
\end{align}
If $\dd^1_1\left( X \right) = b\tp Y$ in  $P$, 
then $\dd_1\left( Y \right) = o\left( b \right) \tp X$ in $N$. 
If $\dd_2\left( X , a \right) = o\left( b \right) \tp Y$ in $P$, 
then $\dd_2\left( Y , o\left( a \right) \right) = o\left( b \right) \tp X$ in $N$.
Finally, if $\dd_3\left( X , a_1 , a_2 \right) = b\tp Y$ in $P$,
then  $\dd_3\left( Y , o\left( a_2 \right) , o\left( a_1 \right) \right) = 
o\left( b \right) \tp X$.

\subsection{A final remark}\label{sec:final} In principle one can forget about the analytic theory and define knot Floer homology as follows. Start from a knot projection $D$ in standard position as in the beginning of \Cref{sec:upperkauffman}. Up to isotopy we can suppose that the crossings $c_1, \dots, c_m$ of $D$ all have different $z$-coordinate. Let $\overline{\mathcal{M}}$ be the type $D$-structure of Section \ref{typeD},  $\underline{\mathcal{M}}=\mathcal{M}_{n}\boxtimes \mathcal{M}_{n-1} \boxtimes \dots \boxtimes \mathcal{M}_{2}$ be the $DA$-bimodule obtained by taking the box tensor product of the $DA$-bimodules of Section \ref{bimodules}, and let $\mathcal{\cX}_i$ denote the $DA$-bimodule associated to $K_{[z(c_i)-\epsilon, z(c_i)+\epsilon]}$  as in Section \ref{sec:crossings} above. Define 
\[C(D)= \overline{\mathcal{M}}\boxtimes \mathcal{X}_1 \boxtimes \dots \boxtimes \mathcal{X}_m \boxtimes \underline{\mathcal{M}} \ .\]         

\vspace{0.1cm}
Obviously the definition of $C(D)$ only makes reference to the diagram $D$, no choice of a generic almost-complex structure needs to be done, and no holomorphic disk count is involved. In fact, in \cite{bordered1} Ozsv\' ath and Szab\' o proved that the chain homotopy type of $C(D)$ does not change if $D$ is changed by any of the Reidemeister moves. This was done locally at level of $DA$-bimodules, by proving that the braid relations are satisfied.  
\begin{thm}[Ozsv\' ath-Szab\' o]
The chain homotopy type of $C\left( D \right)$ is a knot invariant.
\end{thm}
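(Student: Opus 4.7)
The plan is to reduce the invariance of $C(D)$ to a finite list of \emph{local} chain homotopy equivalences between $DA$-bimodules, and then invoke the functoriality of $\boxtimes$ together with a Reidemeister-type theorem adapted to diagrams in standard position.

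First I would establish the algebraic infrastructure: the box tensor product $\boxtimes$ of $DA$-bimodules is associative up to canonical isomorphism, and, more importantly, it descends to chain homotopy equivalence classes. That is, if $M \simeq M'$ and $N \simeq N'$ as $DA$-bimodules (with $M$ or $N'$ operationally bounded so that the sums defining $\boxtimes$ converge), then $M \boxtimes N \simeq M' \boxtimes N'$. This is a standard property of $\mathcal{A}_\infty$-type categories and can be checked by lifting a chain homotopy at the level of structure maps through the graphical calculus already used in \Cref{lem:d_structure_lemma} and in the definition of the $DA$ structure equations. Once this is in hand, it suffices to verify that each local move on the diagram induces a chain homotopy equivalence between the elementary pieces of the tensor factorization, since all other factors appearing in $C(D)$ remain untouched.

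Next I would identify a complete set of local moves. Any two diagrams of the same knot in standard position are related by a finite sequence of: (i) planar isotopies that commute two elementary configurations whose $z$-projections are disjoint, (ii) the three Reidemeister moves R1, R2, R3 performed inside a narrow slice, and (iii) the ``stabilization'' moves that create or cancel a pair of critical points (a cap-cup move). For (i) the required equivalence is essentially tautological because the corresponding $DA$-bimodules act on disjoint strands of idempotents. For (ii) one must exhibit explicit chain homotopy equivalences: for R2, that the box tensor product of the positive-crossing bimodule $P$ and the negative-crossing bimodule $N$ from \Cref{sec:crossings} is homotopy equivalent to the identity bimodule $I(n)$ from \textbf{No critical points, no gain}; for R3, that the two iterated tensor products $P \boxtimes P \boxtimes P$ obtained from the two sides of R3 are chain homotopy equivalent — this is the \emph{braid relation} referenced at the end of the excerpt. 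For (iii) one checks that composing the local-minimum bimodule with the max bimodule and a pair of crossings (in the pattern induced by pulling a strand across a cap) yields the identity bimodule, up to homotopy.

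For each of the moves I would write down the chain homotopy equivalence by constructing an explicit $DA$-bimodule morphism using the structure constants spelled out in \Cref{bimodules}, together with a homotopy operator, and then verify the $DA$-bimodule relations and the homotopy identity by a direct bookkeeping of the $\delta_{1+j}$ and the algebra elements $L_i, R_i, U_i$. The key simplification is that the bimodules associated to an elementary tangle involve only generators near the crossing strand and relate idempotents that differ by a single transposition, so the verifications reduce to finite computations in $\cC(n)$ modulo the relations (R-1) and (R-2). The main obstacle will be the R3 move: the bimodule $P \boxtimes P \boxtimes P$ has many generators and the $\delta_3$ terms of $P$ produce long compositions, so constructing the homotopy and checking the structure equation is a substantial combinatorial task. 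This is precisely the calculation carried out in \cite{bordered1}, and once this braid relation is established, together with the easier R1, R2, and stabilization equivalences and the functoriality of $\boxtimes$, the invariance of the chain homotopy type of $C(D)$ follows immediately.
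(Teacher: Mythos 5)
Your proposal follows essentially the same route the paper indicates: invariance is established locally, by showing that each Reidemeister-type move on a diagram in standard position (together with the commutation and cap--cup moves needed for decomposed diagrams) induces a chain homotopy equivalence of the corresponding elementary $DA$-bimodules --- in particular the braid relations for the crossing bimodules --- and then propagating these equivalences through the box tensor product. The paper itself only sketches this and defers the local computations to \cite{bordered1}, so your outline is a faithful expansion of its argument.
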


Notice that the chain complexes coming from this construction are more ``economic'' than the one coming from the combinatorial formulation of \cite{grid}.      

\bibliography{Final}
\end{document}